\theoremstyle{plain}
\newtheorem{theo}{Theorem}[section]
\newtheorem{prop}[theo]{Proposition}
\newtheorem{coro}[theo]{Corollary}
\theoremstyle{definition}
\newtheorem*{defi}{Definition}
\newtheorem{exam}[theo]{Example}
\theoremstyle{remark}
\newtheorem*{rema}{Remark}
\numberwithin{equation}{section}
\numberwithin{figure}{section}
\newcommand{\mb}[1]{{\textbf {\textit#1}}}
\newcommand{\field}[1]{\mathbb{#1}}
\def\C{\field{C}}
\newcommand{\R}{\field{R}}
\DeclareMathOperator{\cc}{cc}
\DeclareMathOperator{\Tor}{Tor}
\DeclareMathOperator{\rk}{rk}
\DeclareMathOperator{\conv}{conv}
\def\le{\leqslant}
\def\ge{\geqslant}
\newcommand{\zp}{\mathcal Z_P}
\newcommand{\zk}{\mathcal Z_K}
\newcommand{\ko}{\Bbbk}
\begin{document}

\title[Moment-angle manifolds]{Moment-angle manifolds, 2-truncated cubes and Massey operations}

\author{Ivan Limonchenko}

\thanks{The author was supported by the General Financial Grant from the China Postdoctoral Science Foundation, Grant No. 2016M601486.
}

\subjclass[2010]{Primary 13F55, 55S30, Secondary 52B11}

\address{School of Mathematical Sciences, Fudan University, 220 Handan Road, Shanghai, 200433, P.R. China}
\email{ilimonchenko@gmail.com}

\keywords{moment-angle manifold, flag nestohedra, Stanley--Reisner ring, Massey products, graph-associahedron}

\begin{abstract}

We construct a family of manifolds, one for each $n\geq 2$, having a nontrivial Massey $n$-product in their cohomology. These manifolds turn out to be smooth closed 2-connected manifolds with a compact torus $\mathbb T^m$-action called moment-angle manifolds $\mathcal Z_P$, whose orbit spaces are simple $n$-dimensional polytopes $P$ obtained from a $n$-cube by a sequence of truncations of faces of codimension 2 only (\textit{2-truncated cubes}). Moreover, the polytopes $P$ are flag nestohedra but not graph-associahedra. We compute some bigraded Betti numbers $\beta^{-i,2(i+1)}(Q)$ for an associahedron $Q$ in terms of its graph structure and relate it to the structure of the loop homology (Pontryagin algebra) $H_{*}(\Omega\mathcal Z_Q)$. We also study triple Massey products in $H^{*}(\mathcal Z_Q)$ for a graph-associahedron $Q$.

\end{abstract}

\maketitle

\section{Introduction}
The main aim of this work is to show that one of the key objects of study in toric topology -- the moment-angle manifold $\zp$ of a simple convex $n$-dimensional polytope $P$ -- gives us an example of a smooth closed 2-connected manifold with a compact torus action such that its rational cohomology ring may contain a nontrivial higher Massey product of order $n$. The corresponding polytopes $P$ are 2-truncated cubes and, moreover, flag nestohedra, see~\cite{Post05},~\cite{PRW06}. The class of 2-truncated polytopes was studied in toric topology by Buchstaber and Volodin, who proved that flag nestohedra can be realized as 2-truncated cubes and that Gal conjecture on $\gamma$-vectors of simple polytopes holds for 2-truncated cubes and, therefore, for all flag nestohedra~\cite{bu-vo11}. We generalize in the polytopal sphere case the result of Baskakov~\cite{BaskM} who constructed a family of triangulated spheres $K$ whose moment-angle complexes $\mathcal Z_K$ have nontrivial triple Massey products of 3-dimensional classes in $H^{*}(\mathcal Z_K)$. In the lowest dimension Baskakov's construction gives a 2-sphere with 8 vertices $K$ -- the only $K$ with a nontrivial triple Massey product in $H^{*}(\mathcal Z_K)$ among all the fourteen 2-spheres on 8 vertices. Denham and Suciu~\cite{DS} generalized the result of Baskakov by proving a combinatorial criterion for $K$ to give a $\mathcal Z_K$ with a nontrivial triple Massey product of 3-dimensional classes in $H^{*}(\mathcal Z_K)$.

Denote by $K$ a simplicial complex of dimension $n-1$ on the vertex set $[m]=\{1,\ldots,m\}$ and by $\ko$ the base field or the ring of integers. Let $\ko[v_{1},\ldots,v_{m}]$ be a graded polynomial algebra on $m$ variables, $\deg(v_{i})=2$. The \emph{Stanley--Reisner ring} (or the \emph{face ring}) of $K$ over $\ko$ is the quotient ring
$$
   \ko[K]=\ko[v_{1},\ldots,v_{m}]/\mathcal I_K,
$$
where $\mathcal I_K$ is the ideal generated by square free
monomials $v_{i_{1}}\cdots{v_{i_{k}}}$ such that $\{i_{1},\ldots,i_{k}\}$ is not a simplex in $K$. The monomial ideal
$\mathcal I_{K}$ is called the \emph{Stanley--Reisner ideal} of~$K$.
Then $\ko[K]$ has a structure of a $\ko$-algebra and a module over $\ko[v_{1},\ldots,{v_{m}}]$ via the quotient projection. 

In what follows we denote by $P$ a simple $n$-dimensional convex polytope with $m$ {\textit{facets}} (i.e faces of codimension 1) $F_{1},\ldots,F_{m}$. 
Such a polytope $P$ can be defined as a bounded intersection of $m$ halfspaces:
$$
  P=\bigl\{\mb x\in\R^n\colon\langle\mb a_i,\mb
  x\rangle+b_i\ge 0\quad\text{for }
  i=1,\ldots,m\bigr\},\eqno (*)
$$
where $\mb a_i\in\R^n$, $b_i\in\R$. We assume that the hyperplanes
defined by the equations $\langle\mb a_i,\mb x\rangle+b_i=0$ are
in general position, that is, at most $n$ of them meet at a single
point. We also assume that there are no redundant inequalities
in $(*)$, that is, no inequality can be removed
from $(*)$ without changing~$P$. 
Then the \emph{facets} of $P$ are given by
$$
  F_i=\bigl\{\mb x\in P\colon\langle\mb a_i,\mb
  x\rangle+b_i=0\bigr\},\quad\text{for } i=1,\ldots,m.
$$

Let $A_P$ be the $m\times n$ matrix of row vectors $\mb a_i$, and
denote by $\mb b_P$ the column vector of scalars $b_i\in\R$. Then we
can rewrite $(*)$ 
as
\[
  P=\bigl\{\mb x\in\R^n\colon A_P\mb x+\mb b_P\ge\mathbf 0\}.
\]
Consider the affine map
\[
  i_P\colon \R^n\to\R^m,\quad i_P(\mb x)=A_P\mb x+\mb b_P
\]
which embeds $P$ into
\[
  \R^m_\ge=\{\mb y\in\R^m\colon y_i\ge 0\quad\text{for }
  i=1,\ldots,m\}.
\]

\begin{defi}
We define the space $\mathcal Z_P$ as a pullback in the following
commutative diagram~\cite[Lemma 3.1.6, Construction 3.1.8]{bu-pa00-2}:
$$\begin{CD}
  \mathcal Z_P @>i_Z>>\C^m\\
  @VVV\hspace{-0.2em} @VV\mu V @.\\
  P @>i_P>> \R^m_\ge
\end{CD}\eqno 
$$
where $\mu(z_1,\ldots,z_m)=(|z_1|^2,\ldots,|z_m|^2)$. The latter
map may be thought of as the quotient map for the coordinatewise
action of the standard torus
\[
  \mathbb T^m=\{\mb z\in\C^m\colon|z_i|=1\quad\text{for }i=1,\ldots,m\}
\]
on~$\C^m$. Therefore, $\mathbb T^m$ acts on $\zp$ with a quotient space $P$, $i_Z$ is a $\mathbb T^m$-equivariant embedding with a trivial normal bundle, and $\zp$ is embedded into $\C^m$ as a nondegenerate intersection of Hermitian quadrics. One can easily see that $\mathcal Z_P$ has a structure of a smooth closed manifold of dimension $m+n$, called the \emph{moment-angle manifold} of~$P$.
\end{defi}

Suppose $({\bf{X}},{\bf{A}})=\{(X_i,A_i)\}_{i=1}^{m}$ is a set of topological pairs. A particular case of the following construction, which is the most important one for us here, appeared firstly in the work of Buchstaber and Panov~\cite{bu-pa00-2} and then was studied intensively and generalized in the works of Bahri, Bendersky, Cohen, Gitler~\cite{BBCG}, Grbi\'c and Theriault~\cite{GT}, Iriye and Kishimoto~\cite{IK}, and others.

\begin{defi}
A {\textit{polyhedral product}} is a topological space:
$$
({\bf{X}},{\bf{A}})^K=\bigcup\limits_{I\in K}({\bf{X}},{\bf{A}})^I,
$$
where $({\bf{X}},{\bf{A}})^I=\prod\limits_{i=1}^{m} Y_{i}$ for $Y_{i}=X_{i}$, if $i\in I$, and $Y_{i}=A_{i}$, if $i\notin I$.
Particular cases of a polyhedral product $({\bf{X}},{\bf{A}})^K$ include {\textit{moment-angle-complexes}} $\zk=(\mathbb{D}^2,\mathbb{S}^1)^K$ and {\textit{real moment-angle-complexes}} $\mathcal R_K=(\mathbb{D}^1,\mathbb{S}^0)^K$. 
\end{defi}
A comprehensive survey on homotopy theory of polyhedral products and its relations with the Golod property of face rings can be found in~\cite{GT2}.

Denote by $K_P$ the nerve complex of $P$, i.e., the boundary $\partial P^*$ of the dual simplicial polytope. It can be viewed as a $(n-1)$-dimensional simplicial complex on the set $[m]$, whose simplices are subsets $\{i_1,\ldots,i_k\}$ such that $F_{i_1}\cap\ldots\cap F_{i_k}\ne\varnothing$ in~$P$.
By \cite[Theorem 6.2.4]{TT}, $\zp$ is $\mathbb T^m$-equivariantly homeomorphic to the moment-angle-complex $\mathcal Z_{K_P}$. 

The $\Tor$-groups of $K$ acquire a topological interpretation by means of the following result due to Buchstaber and Panov.

\begin{theo}[{\cite[Theorem 4.5.4]{TT} or \cite[Theorem 4.7]{P}}]\label{zkcoh}
The cohomology algebra of the moment-angle-complex $\mathcal Z_K$ is given
by the isomorphisms
\[
\begin{aligned}
  H^{*,*}(\mathcal Z_K;\ko)&\cong\Tor_{\ko[v_1,\ldots,v_m]}^{*,*}(\ko[K],\ko)\\
  &\cong H\bigl[\Lambda[u_1,\ldots,u_m]\otimes \ko[K],d\bigr]\\
  &\cong \bigoplus\limits_{I\subset [m]}\widetilde{H}^{*}(K_{I};\ko),
\end{aligned}
\]
where bigrading and differential in the cohomology of the differential
bigraded algebra are defined by
\[
  \mathop{\mathrm{bideg}} u_i=(-1,2),\;\mathop{\mathrm{bideg}} v_i=(0,2);\quad
  du_i=v_i,\;dv_i=0.
\]
In the third row, $\widetilde{H}^*(K_{I})$ denotes the reduced simplicial cohomology of the {\textit{induced subcomplex}} $K_{I}$ of $K$ (the restriction of $K$ to $I\subset [m]$). The last isomorphism is the sum of isomorphisms 
$$H^p(\mathcal Z_K)\cong\sum\limits_{I\subset [m]}\widetilde{H}^{p-|I|-1}(K_{I}),$$
and the ring structure is given by the maps
$$
\widetilde{H}^{p-|I|-1}(K_{I})\otimes\widetilde{H}^{q-|J|-1}(K_{J})\to \widetilde{H}^{p+q-|I|-|J|-1}(K_{I\cup J}),\eqno (*)
$$
which are induced by the canonical simplicial maps $K_{I\cup J}\hookrightarrow K_{I}*K_{J}$ (join of simplicial complexes) for $I\cap J=\varnothing$ and zero otherwise.
\end{theo}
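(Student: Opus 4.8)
The plan is to prove the three displayed isomorphisms in the order algebra, topology, combinatorics, and then to transport the multiplicative structure through each arrow so that the product formula $(*)$ falls out of the multigrading.

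First I would dispatch the middle isomorphism $\Tor_{\ko[v_1,\ldots,v_m]}^{*,*}(\ko[K],\ko)\cong H[\Lambda[u_1,\ldots,u_m]\otimes\ko[K],d]$ by the Koszul resolution. The bigraded algebra $\Lambda[u_1,\ldots,u_m]\otimes\ko[v_1,\ldots,v_m]$ with $du_i=v_i$, $dv_i=0$ is the standard free resolution of $\ko$ over $R=\ko[v_1,\ldots,v_m]$; applying $\ko[K]\otimes_R(-)$ collapses the polynomial tensor factor and leaves precisely the differential bigraded algebra $\Lambda[u_1,\ldots,u_m]\otimes\ko[K]$, whose homology therefore computes $\Tor_R^{*,*}(\ko[K],\ko)$. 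The prescribed bidegrees $\bideg u_i=(-1,2)$ and $\bideg v_i=(0,2)$ are exactly the degrees of the Koszul generators, so the two bigradings agree, and since the comparison is a morphism of algebras it is automatically multiplicative.

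Next I would establish $H^{*,*}(\mathcal Z_K;\ko)\cong H[\Lambda[u_1,\ldots,u_m]\otimes\ko[K],d]$. Conceptually $\mathcal Z_K$ is the homotopy fibre of the inclusion of the Davis--Januszkiewicz space $(\mathbb C\mathrm P^\infty,\mathrm{pt})^K$ into $(\mathbb C\mathrm P^\infty)^m=B\mathbb T^m$, whose cohomology rings are $\ko[K]$ and $\ko[v_1,\ldots,v_m]$, so the Eilenberg--Moore spectral sequence of this fibration has $E_2=\Tor_{\ko[v_1,\ldots,v_m]}(\ko[K],\ko)$ and collapses because both algebras sit in even internal degree. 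To make the comparison explicit I would instead use the cell structure of $\mathcal Z_K=(\mathbb D^2,\mathbb S^1)^K$: each factor $\mathbb D^2$ contributes cells of dimension $0,1,2$ and each $\mathbb S^1$ cells of dimension $0,1$, so the cells of $(\mathbb D^2)^m$ are products indexed by functions $[m]\to\{\mathrm{pt},T,D\}$ and $\mathcal Z_K$ is the subcomplex spanned by those whose $D$-support is a simplex of $K$. Letting $v_i$ and $u_i$ be the cochains dual to the $2$-cell and the $1$-cell in the $i$-th factor then identifies the cellular cochain complex, as a bigraded complex, with $\Lambda[u_1,\ldots,u_m]\otimes\ko[K]$, the Stanley--Reisner relations reflecting exactly the cells that $\mathcal Z_K$ omits.

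For the third isomorphism I would refine the bigrading on $\Lambda[u_1,\ldots,u_m]\otimes\ko[K]$ to the full $\mathbb Z^m$-multigrading in which $u_i$ and $v_i$ each carry the $i$-th standard basis vector. The differential preserves multidegree, so the complex splits as a direct sum over $\alpha\in\mathbb Z^m_{\ge 0}$; a standard vanishing argument---in each coordinate the local Koszul complex $\Lambda[u_i]\otimes\ko[v_i]$ is acyclic in positive degree---shows the cohomology is concentrated in the squarefree multidegrees, i.e. the indicators of subsets $I\subset[m]$. I would then match the multidegree-$I$ subcomplex with the augmented simplicial cochain complex of the induced subcomplex $K_I$, the monomial $u_{I\setminus\sigma}\,v_\sigma$ corresponding to a simplex $\sigma\subset I$ of $K_I$ and the differential becoming the simplicial coboundary; this identification sends cochain degree $d$ to total cohomological degree $d+|I|+1$, so the homology is $\widetilde H^{*}(K_I)$ and a class in total degree $p$ lands in $\widetilde H^{p-|I|-1}(K_I)$.

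Finally, to recover the product $(*)$ I would note that multiplication in $\Lambda[u_1,\ldots,u_m]\otimes\ko[K]$ carries the multidegree-$I$ and multidegree-$J$ summands into multidegree $I\cup J$ when $I\cap J=\varnothing$ and vanishes otherwise, since a shared index forces a factor $u_i^2=0$; comparing with the simplicial model shows this product is induced by the canonical inclusion $K_{I\cup J}\hookrightarrow K_I*K_J$, which is exactly $(*)$. The step I expect to be the main obstacle is the \emph{multiplicative} half of the first isomorphism: cellular cochains are not strictly commutative, so verifying that the cup product on $H^*(\mathcal Z_K)$ agrees with the algebra structure of $\Lambda[u_1,\ldots,u_m]\otimes\ko[K]$ (equivalently, that the Eilenberg--Moore collapse carries no multiplicative extension problem) is the delicate point on which the upgrade from an additive to a ring isomorphism depends.
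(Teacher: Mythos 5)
The paper offers no proof of this statement --- it is quoted verbatim from \cite[Theorem~4.5.4]{TT} and \cite[Theorem~4.7]{P} --- so your proposal can only be measured against the argument in those cited sources, and in outline it is exactly that argument: the Koszul resolution for the middle isomorphism, the cellular model of $(\mathbb{D}^2,\mathbb{S}^1)^K$ for the first, the $\mathbb Z^m$-multigrading with reduction to squarefree multidegrees for the Hochster-type splitting (your degree bookkeeping $d\mapsto d+|I|+1$ is correct, with the empty simplex giving $u_I$ and hence the augmented cochain complex), and the disjoint-support analysis together with $K_{I\cup J}\hookrightarrow K_I * K_J$ for the product $(*)$.

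Two points need repair before this counts as complete. First, the cellular cochain complex of $\mathcal Z_K$ is not $\Lambda[u_1,\ldots,u_m]\otimes\ko[K]$ itself but its finite-dimensional quotient $R(K)=\Lambda[u_1,\ldots,u_m]\otimes\ko[K]/(v_i^2=u_iv_i=0)$ --- the monomials $v_i^2$ and $u_iv_i$ correspond to no cells --- and you must additionally prove that the quotient projection is a quasi-isomorphism; this is \cite[Lemma~3.2.6]{TT} (the paper itself introduces $R(K)$ right after the theorem statement), proved by splitting off acyclic summands one coordinate at a time. Second, the ``delicate point'' you correctly flag at the end is not left open in the cited proof, and your fallback is weaker than what is needed: Buchstaber and Panov, following Baskakov, construct an explicit cellular approximation of the diagonal of $\mathcal Z_K$, built coordinatewise from a cellular map $\mathbb{D}^2\to\mathbb{D}^2\times\mathbb{D}^2$ respecting the cell structure $\{1,T,D\}$, and check that the induced product on cellular cochains is precisely the multiplication of $R(K)$; this yields the ring isomorphism over any $\ko$, including $\mathbb Z$. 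Your Eilenberg--Moore route (as in the original argument of \cite{bu-pa00-2}) does give the multiplicative comparison over a field, but ``no multiplicative extension problem'' is exactly what must be argued, not assumed, and over the integers that route is known to be inadequate, which is why \cite{TT} proceeds via the cellular diagonal. With those two insertions your outline is a faithful reconstruction of the proof in the cited references.
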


Additively the following theorem of Hochster holds.

\begin{theo}[{\cite{Hoch}}]\label{hoch}
For any simplicial complex $K$ on $m$ vertices we have:
$$
\Tor^{-i,2j}_{\ko[v_{1},\ldots,v_{m}]}(\ko[K],\ko)\cong\bigoplus\limits_{J\subset [m],\,|J|=j}\widetilde{H}^{j-i-1}(K_{J};\ko).
$$
\end{theo}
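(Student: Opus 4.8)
The plan is to compute the $\Tor$-modules through the multigraded Koszul resolution and then to isolate the contribution of each fine degree. Writing $S=\ko[v_1,\ldots,v_m]$, tensoring $\ko[K]$ with the Koszul resolution $\bigl(\Lambda[u_1,\ldots,u_m]\otimes S,\,d\bigr)$ of $\ko$ over $S$ identifies $\Tor^{*,*}_{S}(\ko[K],\ko)$ with the cohomology of the differential bigraded algebra $R=\Lambda[u_1,\ldots,u_m]\otimes\ko[K]$, $du_i=v_i$, $dv_i=0$; this is exactly the first two isomorphisms of Theorem~\ref{zkcoh}. The extra ingredient I would bring in is the fine $\mathbb Z^m$-grading: declaring $\operatorname{mdeg}u_i=\operatorname{mdeg}v_i=\mathbf e_i$ makes both $R$ and $d$ into $\mathbb Z^m$-graded objects, so $R=\bigoplus_{\mathbf a\in\mathbb Z^m}R_{\mathbf a}$ splits into subcomplexes and $\Tor^{*,*}_{S}(\ko[K],\ko)=\bigoplus_{\mathbf a}H(R_{\mathbf a})$, with only $\mathbf a\in\mathbb Z^m_{\ge 0}$ occurring.

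Next I would read off $R_{\mathbf a}$ combinatorially. A monomial basis element of $R$ has the form $u_\sigma\otimes v^{\mathbf b}$ with $\sigma\subseteq[m]$ and $\operatorname{supp}(\mathbf b)\in K$, and it lies in $R_{\mathbf a}$ precisely when $\mathbf e_\sigma+\mathbf b=\mathbf a$; its homological degree is $-|\sigma|$ and its internal degree is $2|\mathbf a|$. Setting $J=\operatorname{supp}(\mathbf a)$ and $j=|J|$, the whole degree-$(-i,2j)$ part of $\Tor$ is then $\bigoplus_{|\mathbf a|=j}H^{-i}(R_{\mathbf a})$.

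The crux is the dichotomy between squarefree and non-squarefree $\mathbf a$. (a) If $\mathbf a$ is not squarefree, say $a_{i_0}\ge 2$, then $v_{i_0}$ divides $v^{\mathbf b}$ for every basis element of $R_{\mathbf a}$, and I would exhibit the contracting homotopy $h(u_\sigma\otimes v^{\mathbf b})=u_{\{i_0\}\cup\sigma}\otimes\bigl(v^{\mathbf b}/v_{i_0}\bigr)$ for $i_0\notin\sigma$ and $h=0$ otherwise; this is well defined because dividing by $v_{i_0}$ cannot enlarge $\operatorname{supp}(\mathbf b)$, so one stays in $\ko[K]$. Splitting $d=\delta+\partial$ into its $u_{i_0}$-part $\delta$ and the remainder $\partial$, one checks $\delta h+h\delta=\mathrm{id}$ directly and $\partial h+h\partial=0$ by the Koszul sign rule, whence $dh+hd=\mathrm{id}$ and $H(R_{\mathbf a})=0$. (b) If $\mathbf a=\mathbf e_J$ is squarefree, then each basis element is $u_\sigma\otimes v_\tau$ with $\sigma\sqcup\tau=J$ and $\tau\in K_J$; sending it to the oriented simplex $\tau$ matches $R_{\mathbf e_J}$, up to sign and the degree shift $|\sigma|=j-|\tau|$, with the augmented simplicial cochain complex of $K_J$, turning $d$ into the simplicial coboundary, so $H^{-i}(R_{\mathbf e_J})\cong\widetilde H^{\,j-i-1}(K_J;\ko)$ with $|\tau|-1=(j-i)-1$.

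Assembling, only the squarefree degrees $\mathbf a=\mathbf e_J$ with $|J|=j$ survive step (a), each contributing $\widetilde H^{\,j-i-1}(K_J;\ko)$ by step (b), and summing over all $J\subseteq[m]$ with $|J|=j$ yields the stated isomorphism. I expect the main obstacle to be step (a): fixing the signs in $h$ and verifying that $dh+hd$ really is the identity on all of $R_{\mathbf a}$ rather than only in the $u_{i_0}$-direction — this anticommutation of $\partial$ with $h$ is precisely what forces the surviving $\Tor$ classes into squarefree multidegrees.
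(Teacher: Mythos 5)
Your argument is correct and complete in all essentials. Note that the paper offers no proof of Theorem~\ref{hoch} to compare against: it is quoted directly from Hochster's paper \cite{Hoch}, and the surrounding machinery --- the Koszul-complex description of $\Tor$ in Theorem~\ref{zkcoh} and the multigraded refinement in Theorem~\ref{mgrad} --- is likewise cited from \cite[Construction 3.2.8, Theorem 3.2.9]{TT}. Your route is exactly the standard argument underlying those citations: you in effect first prove the finer Theorem~\ref{mgrad}, by splitting $\Lambda[u_1,\ldots,u_m]\otimes\ko[K]$ into its $\mathbb Z^m$-graded pieces, killing every non-squarefree multidegree with a contracting homotopy, and identifying the squarefree piece $R_{\mathbf e_J}$ with the augmented simplicial cochain complex of $K_J$ shifted by one; Hochster's formula then follows by summing over squarefree $\mathbf a=\mathbf e_J$ with $|J|=j$. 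The delicate points you flag do check out: when $i_0\notin\sigma$ one has $b_{i_0}=a_{i_0}\ge 2$, so division by $v_{i_0}$ is legitimate and can only shrink the support of the monomial, keeping it in $\ko[K]$ (and when $i_0\in\sigma$, still $b_{i_0}=a_{i_0}-1\ge 1$, so $h\delta=\mathrm{id}$ on that part); with the convention of always inserting $u_{i_0}$ in the first slot the identities $\delta h+h\delta=\mathrm{id}$ and $\partial h+h\partial=0$ hold with the Koszul signs; and the degree count $|\tau|-1=(j-i)-1$ is consistent with $\bideg u_i=(-1,2)$, $\bideg v_i=(0,2)$. Compared with a bare citation, your proof buys the multigraded statement (which the paper records separately as Theorem~\ref{mgrad} and actually uses in the proof of Theorem~\ref{mainMassey} to locate Massey-product representatives) as an intermediate step rather than as an extra imported result.
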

The ranks of the bigraded components of the Tor-algebra 
$$
\beta^{-i,2j}(\ko[K])=\rk_{\ko}\Tor^{-i,2j}_{\ko[v_{1},\ldots,v_{m}]}(\ko[K],\ko)
$$ 
are called the {\textit{bigraded Betti numbers}} of $\ko[K]$ or $K$, when $\ko$ is fixed.
In what follows we need a particular case of the Hochster result for $j=i+1$. One has: 
$$
\beta^{-i,2(i+1)}(P)=\sum\limits_{J\subset [m],|J|=i+1}(\cc(P_J)-1),
$$
where $P_J=\cup_{j\in J}\,F_{j}$ and $\cc(P_J)$ equals the number of connected components of $P_J$.

Due to~\cite[Construction 3.2.8, Theorem 3.2.9]{TT} the Tor-algebra of $K$ acqures a multigrading and the multigraded components can be calculated in terms of induced subcomplexes.
\begin{theo}\label{mgrad}
For any simplicial complex $K$ on $m$ vertices we have:
$$
\Tor^{-i,2J}_{\ko[v_{1},\ldots,v_{m}]}(\ko[K],\ko)\cong\widetilde{H}^{|J|-i-1}(K_{J};\ko),
$$
where $J\subset [m]$ and $\Tor^{-i,2{\bf{a}}}_{\ko[v_{1},\ldots,v_{m}]}(\ko[K],\ko)=0$, if ${\bf{a}}$ is not a $(0,1)$-vector.
\end{theo}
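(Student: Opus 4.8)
The plan is to refine the bigrading of the Koszul-type model for $\Tor$ provided by Theorem~\ref{zkcoh} to a full $\mathbb Z^m$-multigrading and then analyse each multidegree strand separately. Concretely, one makes the differential bigraded algebra $R=\Lambda[u_1,\ldots,u_m]\otimes\ko[K]$, $du_i=v_i$, into a $\mathbb Z\oplus\mathbb Z^m$-graded complex by declaring $u_i$ to sit in homological degree $-1$ and multidegree $2e_i$, and $v_i$ in homological degree $0$ and multidegree $2e_i$, where $e_1,\ldots,e_m$ is the standard basis of $\mathbb Z^m$. Since $du_i=v_i$ preserves the multidegree, the complex splits as a direct sum of subcomplexes $R_{2\mathbf a}$, one for each $\mathbf a\in\mathbb Z^m$, and hence $\Tor^{-i,2\mathbf a}_{\ko[v_1,\ldots,v_m]}(\ko[K],\ko)\cong H^{-i}(R_{2\mathbf a})$. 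A $\ko$-basis of $R_{2\mathbf a}$ is given by the monomials $u_{J'}v^{\mathbf b}$ with $J'\subseteq[m]$, $\mathrm{supp}(\mathbf b)\in K$ and $\chi_{J'}+\mathbf b=\mathbf a$; in particular only nonnegative $\mathbf a$ occur.

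First I would prove the vanishing statement: if $\mathbf a$ is not a $(0,1)$-vector, say $a_k\ge 2$, then $R_{2\mathbf a}$ is acyclic. In this strand every basis monomial $u_{J'}v^{\mathbf b}$ has $b_k=a_k-(\chi_{J'})_k\ge 1$, so $v_k$ always divides $v^{\mathbf b}$. I would introduce the homotopy $s(u_{J'}v^{\mathbf b})=\pm\,u_{J'\cup k}\,v^{\mathbf b-e_k}$ when $k\notin J'$ and $s=0$ when $k\in J'$; note $\mathrm{supp}(\mathbf b-e_k)\subseteq\mathrm{supp}(\mathbf b)\in K$, so the image stays in $\ko[K]$. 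A direct computation, splitting into the cases $k\in J'$ and $k\notin J'$, shows $ds+sd=\pm\,\mathrm{id}$ on $R_{2\mathbf a}$, so its cohomology vanishes. This is the step where the sign conventions and the bookkeeping of which monomials survive the Stanley--Reisner relations must be handled with care, and it is the main obstacle.

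Next I would treat a $(0,1)$-vector $\mathbf a=\chi_J$ with $\mathrm{supp}(\mathbf a)=J$. Here $\chi_{J'}+\mathbf b=\chi_J$ with both summands squarefree forces $J'\subseteq J$ and $\mathbf b=\chi_{J\setminus J'}$, and the basis monomial $u_{J'}v^{\chi_{J\setminus J'}}$ survives precisely when $L:=J\setminus J'$ is a face of the induced subcomplex $K_J$. Reindexing the basis of $R_{2\chi_J}$ by the faces $L\in K_J$ (including $L=\varnothing$), with $u_{J'}v^{\chi_{J\setminus J'}}$ in homological degree $-|J'|=|L|-|J|$, the Koszul differential $u_{J'}v^{\chi_{J\setminus J'}}\mapsto\sum_{j\in J'}\pm\,u_{J'\setminus j}v^{\chi_{J\setminus J'}+e_j}$ becomes, under $L\mapsto L\cup j$, exactly the coboundary of the augmented (reduced) simplicial cochain complex $\widetilde C^{*}(K_J;\ko)$ up to the usual sign. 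A face $L$ of dimension $|L|-1=|J|-i-1$ sits in homological degree $-i$, so taking cohomology yields $H^{-i}(R_{2\chi_J})\cong\widetilde H^{|J|-i-1}(K_J;\ko)$, as claimed.

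Finally I would record the consistency check that summing these multigraded components over all $J$ with $|J|=j$ recovers the bigraded Hochster formula of Theorem~\ref{hoch}, the vanishing outside $(0,1)$-vectors accounting for the coarser $\mathbb Z$-indexing there. The only genuinely delicate point is the homotopy computation in the vanishing step; the identification for squarefree degrees is then a routine, if careful, matching of the Koszul differential with the simplicial coboundary, keeping track of the augmentation that produces reduced rather than ordinary cohomology.
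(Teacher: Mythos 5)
Your proposal is correct, and it is essentially the argument behind the paper's own treatment: the paper gives no proof of Theorem~\ref{mgrad}, deferring to \cite[Construction 3.2.8, Theorem 3.2.9]{TT}, where exactly this multigraded refinement of the Koszul complex is used --- splitting $\Lambda[u_1,\ldots,u_m]\otimes\ko[K]$ into multidegree strands, contracting with $u_k$ to kill the strands with $a_k\ge 2$, and identifying the squarefree strand $R_{2\chi_J}$ with the augmented simplicial cochain complex of $K_J$. The one delicate point in your vanishing step does go through: for a monomial $u_{J'}v^{\mathbf b}$ with $k\notin J'$ one has $b_k=a_k\ge 2$, so $\mathrm{supp}(\mathbf b-e_k+e_j)=\mathrm{supp}(\mathbf b+e_j)$ and the Stanley--Reisner survival conditions in $ds$ and $sd$ match term by term, yielding $ds+sd=\pm\mathrm{id}$ as you claim.
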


Moreover, if we denote by $R(K)=\Lambda[u_{1},\ldots,u_{m}]\otimes\ko[K]/(v_{i}^{2}=u_{i}v_{i}=0,1\leq i\leq m)$ a graded algebra with the differential $d$ as in Theorem~\ref{zkcoh}, then $R(K)$ also acquires multigrading and the following isomorphism holds:
$$
\Tor^{-i,2{\bf{a}}}_{\ko[v_{1},\ldots,v_{m}]}(\ko[K],\ko)\cong H^{-i,2{\bf{a}}}[R(K),d]
$$
for any simplicial complex $K$.

The author is grateful to Victor Buchstaber and Taras Panov for many helpful discussions and advice. We also thank James Stasheff, Daisuke Kishimoto, Djordje Barali\'c, and the referee of this article for their valuable comments and suggestions on improving the text. 

\section{Nestohedra and graph-associahedra}

We begin with a definition of a family of simple polytopes called nestohedra and state the result of Buchstaber and Volodin on geometric realization of flag nestohedra.

\begin{defi}
Let $[n+1]=\{1,2,\dots,n+1\}$, $n\geq 2$. A {\textit{building set}} on [n+1] is a family of nonempty subsets $B=\{S\subseteq [n+1]\}$, such that: 1) $\{i\}\in B$ for all $1\leq i\leq n+1$, 2) if $S_1\cap S_2\ne\varnothing$, then $S_1\cup S_2\in B$. A building set is called {\textit{connected}} if $[n+1]\in B$.

Then a \textit{nestohedron} is a simple convex $n$-dimensional polytope $P_{B}=\sum\limits_{S\in B}\Delta_{S}$, where in the Minkowski sum one has 
$$
\Delta_{S}=\conv\{{e}_j|\,j\in S\}\subset \mathbb R^{n+1}.
$$ 
Note, that facets of $P_{B}$ are in 1-1 correspondence with proper elements $S$ in $B$ (\cite{FS},\cite[Proposition 1.5.11]{TT}).
\end{defi}

\begin{exam}\label{Bcube,simplex}
If $P$ is a combinatorial $n$-simplex, then the subset of $2^{[n+1]}$ consisting of all the singletons $\{i\},1\leq i\leq n+1$ and the whole set $[n+1]$ gives a connected building set $B$, such that $P=P_{B}$ for any $n\geq 2$. \\
If $P$ is a combinatorial $n$-cube then the following set $B$ consisting of
$$
\{1\},\ldots,\{n+1\},\{1,2\},\{1,2,3\},\ldots,[n+1]
$$
will be a connected building set for $P$ for any $n\geq 2$.
\end{exam}

Any $n$-dimensional nestohedron $P_B$ on a connected building set $B$ can be obtained from an $n$-simplex by a sequence of its face truncations. In order to give the precise statement suppose $B_0\subset B_1$ be building sets on $[n+1]$, and $S\in B_1$. Then define a \emph{decomposition of $S$ into elements
of~$B_0$} as $S=S_1\sqcup\cdots \sqcup S_k$, where $S_j$ are
pairwise nonintersecting elements of $B_0$ and $k$ is minimal
among such disjoint representations of~$S$. One can see easily that this decomposition exists and is unique.

\begin{theo}[{\cite[Lemma 1.5.17, Theorem 1.5.18]{TT}}]\label{Simplextrunc}
Every nestohedron $P_{B}$ corresponding to a connected building
set $B$ can be obtained from a simplex by a sequence of face
truncations.

More precisely, let $B_0\subset B_1$ be connected building sets on $[n+1]$.
Then $P_{B_1}$ is combinatorially equivalent to the polytope
obtained from $P_{B_0}$ by a sequence of truncations at the
faces $G_i=\bigcap_{j=1}^{k_i} F_{S_j^i}$ corresponding to the
decompositions $S^i=S_1^i\sqcup\cdots\sqcup S_{k_i}^i$ of elements
$S^i\in B_1\setminus B_0$, numbered in any order that is
inverse to inclusion (i.e. $S^i\supset S^{i'}\Rightarrow i\le
i'$).
\end{theo}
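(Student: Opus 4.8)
The plan is to obtain the first assertion as a special case of the second. For the minimal connected building set $B_0=\{\{1\},\dots,\{n+1\},[n+1]\}$ of Example~\ref{Bcube,simplex} one has $P_{B_0}=\Delta_{[n+1]}+\sum_i\Delta_{\{i\}}$, which is an $n$-simplex up to translation, so once the relative statement is proved it suffices to take this $B_0$ and $B_1=B$. For the relative statement I would induct on $|B_1\setminus B_0|$, reducing everything to adjoining a single proper set. Concretely, write $B_1\setminus B_0=\{S^1,\dots,S^N\}$ ordered inverse to inclusion and set $\tilde B_i=B_0\cup\{S^1,\dots,S^i\}$. First I would check that each $\tilde B_i$ is again a connected building set: the only axiom to verify is closure under unions of intersecting pairs, and if $S^i\cap T\neq\varnothing$ for some $T\in\tilde B_{i-1}$, then $S^i\cup T\in B_1$ contains $S^i$, so by the ordering it either equals $S^i$, lies in $B_0$, or is one of $S^1,\dots,S^{i-1}$; in every case it already lies in $\tilde B_i$. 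The same ordering shows that no previously adjoined $S^j$ with $j<i$ is contained in $S^i$, so the decomposition of $S^i$ into maximal elements of $\tilde B_{i-1}$ uses only elements of $B_0$, matching the statement of the theorem.

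It then remains to prove the single step: if $B$ is a connected building set, $S\subsetneq[n+1]$, $S\notin B$, and $B'=B\cup\{S\}$ is a building set, then $P_{B'}$ is combinatorially the truncation of $P_B$ at the face $G=F_{S_1}\cap\cdots\cap F_{S_k}$, where $S=S_1\sqcup\cdots\sqcup S_k$ is the decomposition into the maximal elements of $B$ contained in $S$. I would first confirm that $\{S_1,\dots,S_k\}$ is a \emph{nested set} of $B$, so that $G$ is a genuine face of codimension $k$: the $S_j$ are pairwise disjoint, since an intersecting pair would have its union in $B$ and still inside $S$, contradicting maximality; and for the same reason no sub-union $S_{j_1}\cup\cdots\cup S_{j_l}$ with $l\ge 2$ can lie in $B$. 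These are exactly the conditions defining a nested set, so $G$ is nonempty with $\dim G=n-k$.

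The geometric core is to show that passing from $B$ to $B'$ realizes truncation at $G$. I would argue on normal fans: the normal fan of $P_{B'}=P_B+\Delta_S$ is the common refinement of the normal fans of $P_B$ and $\Delta_S$, and dually the nerve $K_{P_{B'}}$ is obtained from $K_{P_B}$ by introducing a single new vertex, namely the new proper element $S$, as the stellar subdivision of the simplex $\{S_1,\dots,S_k\}\in K_{P_B}$. This is precisely the combinatorial operation dual to truncating the face $G$ of a simple polytope. I would verify the identification by comparing nested sets directly: a nested set of $B'$ either contains $S$ or not; those avoiding $S$ are the nested sets of $B$ not containing all of $S_1,\dots,S_k$, while those containing $S$ reproduce exactly the cone over the appropriate link, which is the star/antistar bookkeeping of a stellar subdivision.

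The main obstacle is this last identification: one must show that the extra Minkowski summand $\Delta_S$ refines the normal fan of $P_B$ only along the cone over $G$ and nowhere else, i.e. that the new facet supplied by $S$ cuts off precisely a neighborhood of $G$ and leaves the remaining facet structure combinatorially intact. Controlling this amounts to checking that $S$ is nested with an element $T\in B$ exactly in the cases dictated by truncation (either $T\subseteq S_j$ for some $j$, or $S\subseteq T$, or $T$ disjoint from $S$, subject to the non-union condition) and that no spurious faces appear; it is here that the disjointness and maximality of the decomposition $S=S_1\sqcup\cdots\sqcup S_k$ do the real work.
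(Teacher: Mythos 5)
Your outline is sound, but there is nothing in the paper to compare it against line by line: the paper states this result with a citation and gives no proof, deferring entirely to \cite[Lemma~1.5.17, Theorem~1.5.18]{TT}. Your inductive skeleton coincides with the one used there — adjoin the elements of $B_1\setminus B_0$ one at a time in any order refining reverse inclusion, check that each intermediate family $\tilde B_i$ is again a connected building set, and use the ordering to see that the decomposition of $S^i$ over $\tilde B_{i-1}$ involves only elements of $B_0$ (all of which you verify correctly). Where you genuinely diverge is the one-step lemma: in \cite{TT} it is proved not via normal fans but via the explicit irredundant presentation of the nestohedron by inequalities, $P_B=\bigl\{x\colon \sum_{i\in[n+1]}x_i=z_{[n+1]},\ \sum_{i\in S}x_i\ge z_S \text{ for } S\in B\bigr\}$ for suitable $z$, so that adjoining $S$ adds exactly one inequality (equivalently, one adds a small Minkowski summand $\varepsilon\Delta_S$), and a direct check with $S=S_1\sqcup\cdots\sqcup S_k$ shows the new halfspace cuts off precisely $G=F_{S_1}\cap\cdots\cap F_{S_k}$. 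Your fan/nested-set route, in the spirit of \cite{FS} and \cite{Post05}, is a correct alternative once the two gaps you flag are closed, and both close cleanly: first, because $S=S_1\sqcup\cdots\sqcup S_k$, the normal ray of the new facet is the \emph{sum} of the normal rays of $F_{S_1},\dots,F_{S_k}$, hence lies in the relative interior of the normal cone of $G$ — this is exactly why the common refinement is a stellar subdivision at that cone and subdivides nothing else; second, your identification of the nested sets of $B'$ avoiding $S$ with the nested sets of $B$ not containing all of $S_1,\dots,S_k$ needs the small observation that a $B$-nested, pairwise disjoint family with union $S$ must equal $\{S_1,\dots,S_k\}$: grouping its members by the maximal $S_j$ containing each, the members inside a fixed $S_j$ union to $S_j\in B$, which nestedness forbids unless there is exactly one such member, equal to $S_j$. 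The trade-off is that your approach makes the duality ``stellar subdivision $\leftrightarrow$ face truncation'' transparent, while the inequality presentation of \cite{TT} delivers the combinatorial equivalence and a geometric realization in one stroke without fan machinery.
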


Buchstaber suggested to call a simple convex $n$-dimensional polytope $P$ a {\textit{2-truncated cube}} if it can be obtained from an $n$-cube by a sequence of cut off some faces of codimension 2 only. It is allowed to cut off any codimension 2 face that we have at the previous step of the sequence of face truncations. 

\begin{exam}
Here is an example of a 3-dimensional 2-truncated cube $\mathcal P$ which we shall use later.
\begin{figure}[h]
\includegraphics[scale=0.5]{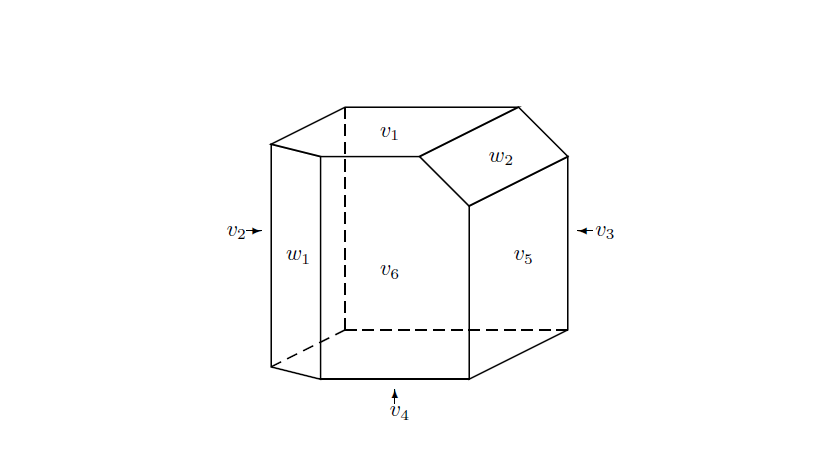}
\caption{A 2-truncated cube $\mathcal P$.}
\label{masseyfig}
\end{figure}

\end{exam}

Then any flag nestohedron can be realized as a 2-truncated cube. The following statement holds.

\begin{theo}[{\cite[Proposition 6.1, Theorem 6.5]{bu-vo11}}]\label{BuVo}
A nestohedron $P_{B}$ is a flag polytope if and only if it is a
2-truncated cube.

More precisely, if $P_{B}$ is a flag polytope, then there exists a
sequence of building sets $B_0 \subset B_1\subset \cdots
\subset B_N=B$, where $P_{B_0}$ is a combinatorial cube,
$B_i=B_{i-1}\cup \{S_i\}$, and $P_{B_i}$ is obtained from
$P_{B_{i-1}}$ by a $2$-truncation at the face $F_{S_{j_1}}\cap
F_{S_{j_2}}\subset P_{B_{i-1}}$ of codimension 2, where $S_i=S_{j_1}\sqcup
S_{j_2}$, and $S_{j_1}, S_{j_2}\in B_{i-1}$.
\end{theo}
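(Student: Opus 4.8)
The plan is to reduce flagness to a condition on the building set, translate truncations into building-set moves via Theorem~\ref{Simplextrunc}, and then prove each implication by induction on the number of facets. The first step is to set up the dictionary. By \cite{FS} (see also \cite[\S1.5]{TT}) the nerve $K_{P_B}$ is the \emph{nested set complex} of $B$: the proper elements $S\in B$ are the vertices, and $\{S_1,\dots,S_k\}$ spans a simplex precisely when it is \emph{nested}, i.e. pairwise laminar and having no subfamily of pairwise disjoint elements whose union lies in $B$. Recall $P_B$ is flag iff every minimal non-face has two vertices. If $N$ is a minimal non-face, every proper subset is nested, so $N$ is pairwise laminar; the violation must then come from a disjoint subfamily with union in $B$, and minimality forces this subfamily to be all of $N$. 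Hence a minimal non-face of size $\ge 3$ is exactly a \emph{reduced partition} $U=S_1\sqcup\cdots\sqcup S_p$ of some $U\in B$ into $p\ge 3$ elements of $B$ with no union of a proper subcollection in $B$. This gives the working criterion: $P_B$ is flag iff no $U\in B$ admits a reduced partition into three or more parts. Finally, by Theorem~\ref{Simplextrunc}, adjoining to a building set $B'$ an element $S$ whose decomposition in $B'$ is $S=S_1\sqcup S_2$ is exactly a codimension-$2$ truncation at $F_{S_1}\cap F_{S_2}$; this is the only move producing a $2$-truncation, so the flag criterion is precisely the statement that one is never forced into a truncation of codimension $\ge 3$.

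The implication ``$2$-truncated cube $\Rightarrow$ flag'' is the easy one and does not use the nestohedral structure. A cube is flag, and cutting a codimension-$2$ face $F_i\cap F_j$ of a simple polytope corresponds, on the dual simplicial complex, to the stellar subdivision of the edge $\{i,j\}$; since a stellar subdivision of a flag complex at an edge is again flag, flagness propagates along the entire truncation sequence. (Equivalently, adjoining a disjoint two-part element $S=S_1\sqcup S_2$ to a flag building set only merges maximal sub-elements inside larger sets, so it cannot create a new reduced partition of length $\ge 3$.)

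For ``flag $\Rightarrow$ $2$-truncated cube'' I would induct on $|B|$. If $P_B$ is combinatorially a cube we stop; otherwise the aim is to find $S\in B$ such that $B'=B\setminus\{S\}$ is again a flag building set and $S$ re-enters as a codimension-$2$ truncation of $P_{B'}$. Peeling such an $S$, applying the inductive hypothesis to the smaller flag nestohedron $P_{B'}$, and then re-truncating yields the required chain $B_0\subset\cdots\subset B_N=B$ with $P_{B_0}$ a cube, exactly as in the ``more precisely'' part of the statement. A natural candidate is an $S$ whose maximal proper sub-elements are two disjoint sets $S_1,S_2$, giving the decomposition
\[
S=S_1\sqcup S_2,\qquad S_1,S_2\in B,\ S_1\cap S_2=\varnothing .
\]
For such $S$ one checks that no intersecting pair of other elements can have union $S$, because any proper sub-element of $S$ lies in $S_1$ or in $S_2$; hence $S$ is not forced by the building-set closure, $B'$ is a genuine building set, and $S=S_1\sqcup S_2$ is its codimension-$2$ re-truncation.

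The main obstacle is the existence of such an $S$ that also preserves flagness, together with the fact that the de-truncation never stalls before reaching a cube. The difficulty is that deleting $S=S_1\sqcup S_2$ replaces $S$ by $S_1,S_2$ inside the decomposition of every $R\in B$ having $S$ as a maximal proper sub-element, which threatens to lengthen a reduced partition of $R$ from two parts to three. I would handle this by choosing $S$ maximal with respect to inclusion among the disjoint two-part elements: then the minimal $R\supsetneq S$ must be of overlapping type $R=T_1\cup T_2$ with $T_1\cap T_2\ne\varnothing$ and $S\in\{T_1,T_2\}$, and a short case analysis of how $S_1,S_2$ meet the other part again produces exactly two maximal sub-elements of $R$. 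The guiding example is the pentagon $B=\{1,2,3,12,23,123\}$: deleting the disjoint two-part element $12$ converts the overlapping decomposition $123=12\cup 23$ into the disjoint one $123=1\sqcup 23$ and returns the square. One must also show that if no disjoint two-part element can be removed in this way then $P_B$ is already a cube; this is where the flag hypothesis is used in full, since it is precisely what rules out the simplex-type corners (reduced partitions into $\ge 3$ singletons) that would obstruct any un-truncation. Carrying out this maximality argument and the accompanying case analysis on reduced partitions is the technical heart of the proof of Theorem~\ref{BuVo}.
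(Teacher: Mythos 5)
First, a point of comparison: the paper does not prove this statement at all --- it is imported verbatim from Buchstaber--Volodin (\cite{bu-vo11}, Proposition 6.1 and Theorem 6.5), so there is no in-paper proof to measure your attempt against; your proposal has to stand on its own as a proof of the cited result. Much of your setup does stand: the identification of $K_{P_B}$ with the nested set complex, the derived criterion that $P_B$ is flag iff no $U\in B$ admits a reduced partition $U=S_1\sqcup\cdots\sqcup S_p$, $p\ge 3$ (this is essentially \cite[Proposition 6.1]{bu-vo11}), and the easy direction via stellar subdivision of an edge preserving flagness are all correct. Your observation that if $S$ has exactly two maximal proper sub-elements $S_1,S_2$ and these are disjoint, then $B\setminus\{S\}$ is again a building set and $S$ re-enters as a $2$-truncation, is also correct.

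The genuine gap is in the hard direction, and it is not merely that you defer ``the technical heart'': the specific selection rule you propose is wrong. You choose $S$ \emph{maximal} with respect to inclusion among disjoint two-part elements, but maximality, while it does protect flagness upstairs (no $R\supsetneq S$ can have a two-part partition with part $S$), systematically destroys the building-set property downstairs. In your own guiding example, the pentagon $B=\{1,2,3,12,23,123\}$, the unique maximal disjoint two-part element is $123$ (it contains both $12$ and $23$), and deleting it is illegal: $12\cap 23\neq\varnothing$ forces $123\in B$, so $B\setminus\{123\}$ is not a building set --- note that you in fact delete the \emph{non-maximal} element $12$, contradicting your stated rule. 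Restricting to maximal \emph{proper} elements does not repair this: for $B=\{1,2,3,4,12,23,123,1234\}$ on $[4]$ (a flag nestohedron, combinatorially a pentagon times a segment), the unique maximal proper disjoint two-part element is again $123=12\cup 23$, whose removal is again not a building set; the correct peel is the \emph{minimal} element $12$ (or $23$). So the two requirements you need --- exactly two disjoint maximal sub-elements of $S$ (for legality of the deletion and the codimension-$2$ re-truncation) and no element losing its last two-part partition (for flagness of $B\setminus\{S\}$) --- pull in opposite directions, and your maximality heuristic resolves neither the existence of a simultaneously admissible $S$ in a general flag $B$ nor the termination of the peeling at a cube. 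Proving exactly that is the content of \cite[Theorem 6.5]{bu-vo11} (where the sequence is in effect constructed in the opposite, bottom-up direction, compatible with the inverse-inclusion ordering required by Theorem~\ref{Simplextrunc}), so as written your argument establishes only the easy implication and the reduction, not the theorem.
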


The next family of polytopes introduced by M.Carr and S.Devadoss~\cite{CD} are flag nestohedra and, therefore, by Theorem~\ref{BuVo} can be realized as 2-truncated cubes.  

\begin{defi}
\textit{A graphical building set} $B(\Gamma)$ for a (simple) graph $\Gamma$ on the vertex set $[n+1]$ consists of such $S$ that the induced subgraph $\Gamma_{S}$ on the vertex set $S\subset [n+1]$ is a connected graph.\\
Then $P_{\Gamma}=P_{B(\Gamma)}$ is called a \textit{graph-associahedron}.
\end{defi}

\begin{exam}
The following families of graph-associahedra are of particular interest in convex geometry, combinatorics and representation theory. 
\begin{itemize}
\item $\Gamma$ is a complete graph on $[n+1]$.\\
 Then $P_{\Gamma}=Pe^n$ is a \textit{permutohedron}, see Figure~\ref{permfig}.
\begin{figure}[h]
\includegraphics[scale=0.5]{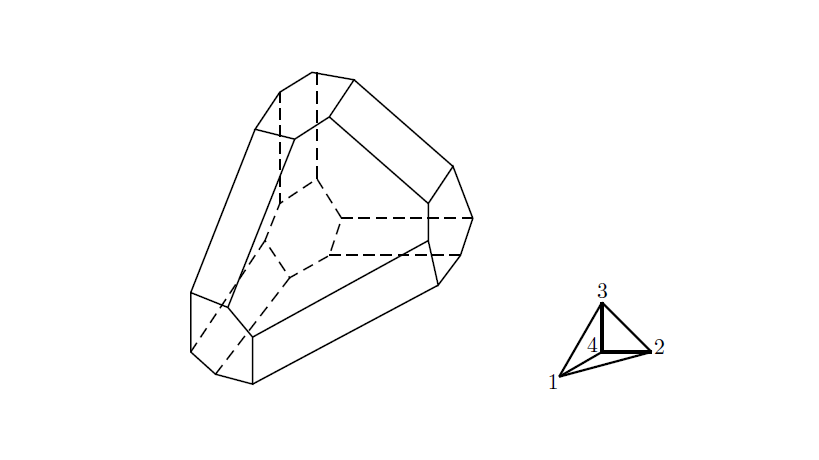}
\caption{3-dimensional permutohedron and the
corresponding graph.}\label{permfig}
\end{figure}

\item $\Gamma$ is a stellar graph on $[n+1]$.\\
 Then $P_{\Gamma}=St^n$ is a \textit{stellahedron}, see Figure~\ref{stelfig}.
\begin{figure}[h]
\includegraphics[scale=0.5]{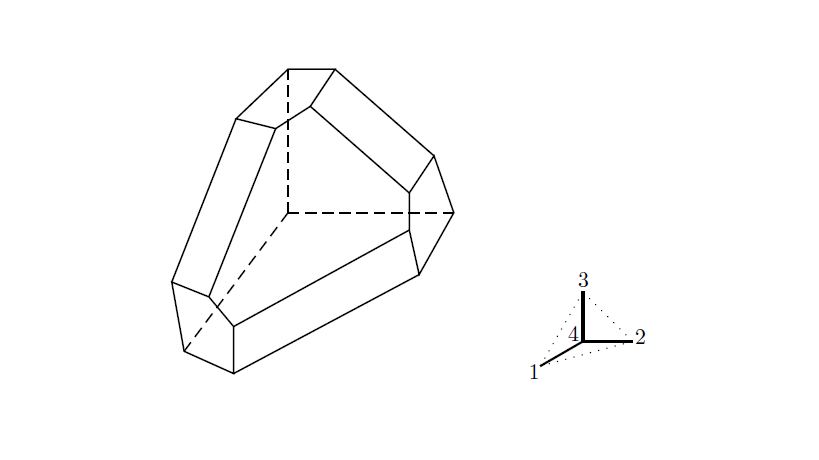}
\caption{3-dimensional stellahedron and the
corresponding graph.}\label{stelfig}
\end{figure}

\item $\Gamma$ is a cycle graph on $[n+1]$.\\
 Then $P_{\Gamma}=Cy^n$ is a \textit{cyclohedron} (or Bott-Taubes polytope~\cite{BT}), see Figure~\ref{cyclfig}.
\begin{figure}[h]
\includegraphics[scale=0.5]{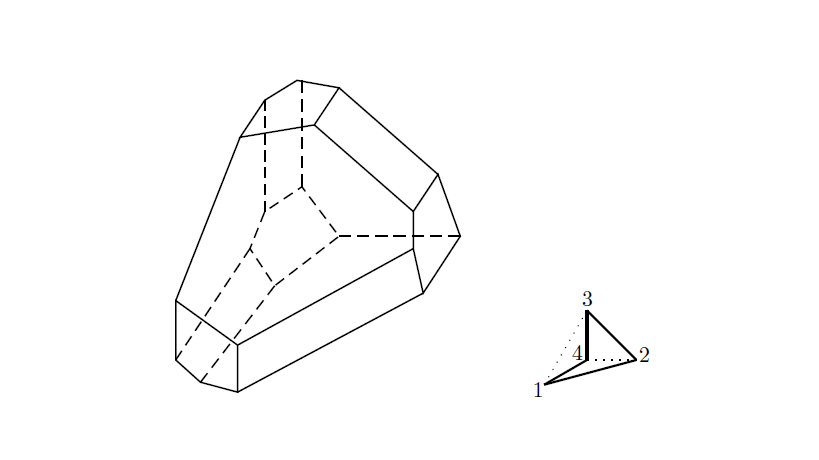}
\caption{3-dimensional cyclohedron and the
corresponding graph.}\label{cyclfig}
\end{figure}

\item $\Gamma$ is a chain graph on $[n+1]$.\\
 Then $P_{\Gamma}=As^n$ is an \textit{associahedron} (or Stasheff polytope~\cite{S}), see Figure~\ref{assfig}.
\begin{figure}[h]
\includegraphics[scale=0.5]{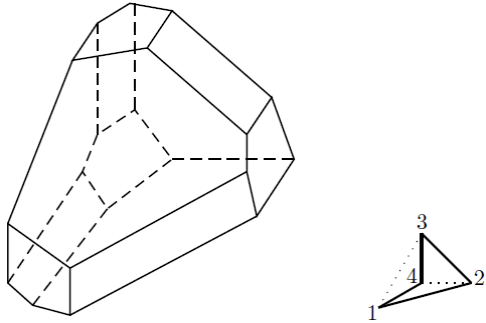}
\caption{3-dimensional associahedron and the
corresponding graph.}\label{assfig}
\end{figure}

\end{itemize}
\end{exam}

In order to determine the nerve complex $K_P$ of a graph-associahedron $P=P_{\Gamma}$ we should describe the combinatorial structure of its face poset.
The following is a reformulation of the general property stated in~\cite[Theorem 1.5.13]{TT}.

\begin{prop}\label{Fposet}
Facets of $P_{\Gamma}$ are in 1-1 correspondence with non maximal connected subgraphs of $\Gamma$.\\ 
Moreover, a set of facets corresponding to such subgraphs $\Gamma_{i_1},\dots, \Gamma_{i_s}$ has a nonempty intersection if and only if:
\begin{itemize}
\item[(1)] For any two subgraphs $\Gamma_{i_k},\Gamma_{i_l}$, either they do not have a common vertex or one is a subgraph of another;

\item[(2)] If any two of the subgraphs 
$\Gamma_{i_{k_1}},\dots,\Gamma_{i_{k_l}},l\geqslant 2$ do not have common vertices, then their union graph is disconnected.
\end{itemize}
\end{prop}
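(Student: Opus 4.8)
The plan is to derive the statement directly from the nested-set description of the face lattice of a nestohedron, specialised to the graphical building set $B=B(\Gamma)$. Recall from \cite[Theorem 1.5.13]{TT} that the faces of $P_{B}$ are in bijection with the \emph{nested sets} of $B$, the codimension of a face equalling the cardinality of the corresponding nested set; here a collection $\{S_1,\dots,S_s\}$ of proper elements of $B$ is nested when (a) any two members are comparable under inclusion or disjoint, and (b) for every pairwise disjoint subcollection $S_{j_1},\dots,S_{j_p}$ with $p\ge 2$ the union $S_{j_1}\cup\cdots\cup S_{j_p}$ does not lie in $B$. In particular facets correspond to one-element nested sets, i.e.\ to arbitrary proper elements of $B$, and since $P_{B}$ is simple a nonempty intersection of $s$ distinct facets is a codimension-$s$ face; hence $F_{S_1}\cap\cdots\cap F_{S_s}\ne\varnothing$ precisely when $\{S_1,\dots,S_s\}$ is nested.

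First I would specialise the objects. By definition $B(\Gamma)$ consists of those $S\subseteq[n+1]$ whose induced subgraph $\Gamma_{S}$ is connected, so its proper elements are exactly the non-maximal connected induced subgraphs of $\Gamma$; this already yields the asserted $1$-$1$ correspondence between facets of $P_{\Gamma}$ and such subgraphs, writing $\Gamma_{i_k}=\Gamma_{S_k}$ for a proper vertex set $S_k$.

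The heart of the argument is the translation of conditions (a) and (b) into graph language. For (a) I would note that, for connected induced subgraphs, the vertex inclusion $S_k\subseteq S_l$ holds if and only if $\Gamma_{S_k}$ is a subgraph of $\Gamma_{S_l}$ (which is then automatically induced), while $S_k\cap S_l=\varnothing$ says exactly that $\Gamma_{S_k}$ and $\Gamma_{S_l}$ share no vertex; thus (a) is the contrapositive form of condition (1). For (b) I would invoke the defining property of $B(\Gamma)$: for pairwise vertex-disjoint subgraphs the union $S_{k_1}\cup\cdots\cup S_{k_l}$ belongs to $B(\Gamma)$ if and only if the induced subgraph on that vertex set is connected, so ``$\bigcup S_{k_j}\notin B(\Gamma)$'' is literally ``the induced union graph is disconnected,'' which is condition (2). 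Combining the two translations yields the proposition.

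I expect the only delicate point to be the reading of condition (2). Because the $\Gamma_{i_{k_j}}$ there are taken pairwise vertex-disjoint, their naive union as abstract graphs is automatically disconnected; the condition is meaningful only if ``their union graph'' is understood as the \emph{induced} subgraph $\Gamma_{S_{k_1}\cup\cdots\cup S_{k_l}}$ of $\Gamma$, which may acquire edges running between the pieces. With this interpretation (2) matches (b) on the nose, and one must also verify that the nested-set axioms are genuinely equivalent to the conjunction of (1) and (2) rather than to either in isolation, i.e.\ that no pairwise or higher-order constraint is lost in passing between the set-theoretic and graph-theoretic formulations.
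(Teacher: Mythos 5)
Your proposal is correct and follows essentially the same route as the paper, which offers no independent argument but states the proposition as a direct reformulation of the nested-set description of the face poset of a nestohedron in \cite[Theorem 1.5.13]{TT}; your unpacking of the nested-set axioms into conditions (1) and (2) --- including the crucial reading of ``union graph'' as the \emph{induced} subgraph on the union of the vertex sets, since otherwise (2) would be vacuous --- is exactly that reformulation, together with the standard fact that in a simple polytope a nonempty intersection of $s$ distinct facets is a codimension-$s$ face. The only minor point worth tightening is that for a disconnected $\Gamma$ the facets correspond to the \emph{non-maximal} elements of $B(\Gamma)$ (excluding the connected components of $\Gamma$) rather than merely the proper ones, which is how the paper's phrase ``non maximal connected subgraphs'' should be read.
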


Note, that if $P$ is a permutohedron then its facets $F_{1}$ and $F_{2}$ have a nonempty intersection if and only if the corresponding subgraphs $\Gamma_{1}$ and $\Gamma_{2}$ are subgraphs of one another.

\section{Bigraded Betti numbers of graph-associahedra}

In this section we describe certain bigraded Betti numbers of associahedra $P$ in terms of combinatorics of their graphs $\Gamma$. This approach can be viewed as another argument to prove our previous result~\cite[Theorem 2.9]{Li} and can be used to compute bigraded Betti numbers $\beta^{-i,2(i+1)}(P)$ of all nestohedra. 

\begin{prop}\label{Pe}
The following statements on the additive structure of $H^*(\zp)$ hold.
\begin{itemize}
\item[(a)]
Suppose $P=P_{B_{1}}$ and $Q=P_{B_{2}}$ are $n$-dimensional nestohedra on connected building sets $B_{i}, i=1,2$ and $J\subset B_{1}\subset B_{2}$. Consider the following set: 
$$
\overline{J}=J\sqcup\{S\in{B_{2}\setminus{B_{1}}} |\,\exists S_1\in J, S_{1}\subset S\}.
$$
Then $P^{n}_{J}$ is homeomorphic to $Q^{n}_{\overline{J}}$;
\item[(b)] 
If a graph $\Gamma$ contains a subgraph isomorphic to a complete graph $K_m$ on $m>5$ vertices, or has a vertex of degree more than 5, then there is a 2-torsion element in $H^*(\zp), P=P_{\Gamma}$; moreover, $H^*(\zp)$ may contain any given finite group as a direct summand for some $P=Pe^n$ and $P=St^{n+1}$, if $n\geq 5$. If a graph $\Gamma$ has less than or equal to 5 vertices, then $H^*(\zp), P=P_{\Gamma}$ is torsion free.
\end{itemize}
\end{prop}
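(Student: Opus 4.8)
The plan is to translate the whole statement into the (co)homology of induced subcomplexes of the nerve $K_P$ and then either exhibit or exclude torsion there. The starting point is Theorem~\ref{zkcoh} together with Theorem~\ref{hoch}, which give $H^*(\zp;\Z)\cong\bigoplus_{J\subset[m]}\widetilde H^*(K_J;\Z)$, so $H^*(\zp)$ has torsion if and only if some full subcomplex $K_J=(K_P)_J$ does. The second ingredient is the nerve lemma applied to the cover of the union of facets $P_J=\bigcup_{j\in J}F_j$ by the facets $\{F_j\}_{j\in J}$: every nonempty intersection of these facets is a face of $P$, hence contractible, so $P_J\simeq(K_P)_J=K_J$. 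Thus unions of facets model the cohomological summands, which is exactly the object appearing in part~(a).

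For part~(a) I would induct on $|B_2\setminus B_1|$, reducing by Theorem~\ref{Simplextrunc} to a single face truncation $B_2=B_1\cup\{S\}$ at the face $G=\bigcap_j F_{S_j}$ with $S=\bigsqcup_j S_j$. On the level of nerves this truncation is the stellar subdivision of $K_P$ at the simplex $\tau=\{S_1,\dots,S_k\}$ dual to $G$, introducing the new vertex $S$; geometrically it modifies $\partial P\cong S^{n-1}$ only in a neighborhood of $G$, replacing a collar of $G$ by the new facet $F_S\cong G\times\Delta^{k-1}$ while keeping every other facet up to homeomorphism. I would then build an explicit homeomorphism $P_J\to Q_{\overline J}$ that is the identity away from $G$ and near $G$ either absorbs the new cap $F_S$ into the union or leaves it out. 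The crux --- and the step I expect to be most delicate --- is verifying that the cap is absorbed (equivalently, that $S$ must be added to $\overline J$) exactly when some $S_1\in J$ satisfies $S_1\subseteq S$; this is where the decomposition combinatorics of Theorem~\ref{Simplextrunc} enters, as one must check that $S_1\subseteq S$ is precisely the condition under which $P_J$ fills the truncated collar, so that the glued facet $F_{S_1}'\cup F_S$ is homeomorphic to $F_{S_1}$.

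For the torsion-free half of part~(b), suppose $\Gamma$ has at most $5$ vertices, so $n\le 4$ and $K_P\cong S^{n-1}$ with $d:=n-1\le 3$. For any $J$ the full subcomplex $X=K_J$ is a compact subset of $S^{d}$, and Alexander duality gives $\widetilde H^{\,i}(X)\cong\widetilde H_{d-i-1}(S^{d}\setminus X)$. By universal coefficients the torsion of $H^*(X)$ lives in the torsion of $H_1(X)$ and $H_2(X)$, since $H_0$ and the top homology are always free. But Alexander duality expresses the corresponding cohomology groups through $\widetilde H_0(S^{d}\setminus X)$ (free) and $\widetilde H_{-1}(S^{d}\setminus X)=0$; hence $H_1(X)$ and $H_2(X)$ are torsion-free (the cases $d<3$ being even easier). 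Therefore every $\widetilde H^*(K_J;\Z)$, and so $H^*(\zp)$, is torsion-free.

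For the existence of torsion I would exhibit an induced subcomplex homotopy equivalent to $\mathbb{RP}^2$. If $\Gamma\supseteq K_m$ on a vertex subset $W$, then by Proposition~\ref{Fposet} the restriction $(K_P)_{J'}$ to the facets indexed by proper nonempty subsets of $W$ is exactly the permutohedral nerve $K_{Pe^{m-1}}\cong S^{m-2}$, the order complex of the proper nonempty subsets of $[m]$; if instead $\Gamma$ has a vertex $v$ of degree $d$, I would restrict to $W=\{v\}\cup N(v)$ and use part~(a) to compare unions of facets of the stellahedron $St^{d}=P_{B(\mathrm{star})}$ with those of $P_{\Gamma_W}$ (legitimate since $B(\mathrm{star})\subseteq B(\Gamma_W)$ on the common ground set $W$), transferring any torsion from $H^*(\mathcal Z_{St^{d}})$ into a summand of $H^*(\zp)$. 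It then suffices to realize $\mathbb{RP}^2$ as a full subcomplex of $K_{Pe^{5}}\cong S^4$ (resp. of $K_{St^{6}}\cong S^5$): since $\mathbb{RP}^2$ embeds in $S^4$ but not in $S^3$, such a full subcomplex appears precisely once $\dim K_P\ge 4$, which for the permutohedron is exactly $n\ge 5$ and explains the thresholds $m>5$ and $\deg v>5$, dovetailing with the torsion-free computation above. Concretely I would produce it as the order complex of a subposet of $2^{[6]}\setminus\{\varnothing,[6]\}$ modelling $\mathbb{RP}^2$. The same mechanism yields universality: every finite abelian group $A$ is a direct summand of $\widetilde H^*(L)$ for a finite complex $L$ (a Moore space), $L$ is homeomorphic to the order complex of its face poset, and any finite poset embeds as an induced subposet of $2^{[N]}\setminus\{\varnothing,[N]\}$ via down-sets; realizing that poset inside $K_{Pe^{N-1}}$ (resp. $K_{St^{N}}$) makes $A$ a direct summand of $H^*(\zp)$ for a suitable permutohedron (resp. stellahedron) with $n\ge 5$. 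The main obstacle I anticipate is the passage from an abstract embedding of $\mathbb{RP}^2$ (or of a Moore space) into a sphere to its realization as a genuine \emph{induced} subcomplex of these specific nerves, alongside the attaching-condition bookkeeping in part~(a).
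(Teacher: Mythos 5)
Your proposal is correct and takes essentially the same route as the paper: part (a) rests on the truncation-as-stellar-subdivision picture of Theorem~\ref{Simplextrunc} (the paper also sketches an alternative Fenn-style radial projection of $\partial P$ onto $\partial Q$ from a common center, which you do not need), and part (b) uses Alexander duality plus Hochster's theorem for the torsion-free half and full-subcomplex embeddings of the barycentrically subdivided six-vertex $\mathbb{RP}^2$ (and of subdivided Moore spaces, for the universality claim) into $K_{Pe^n}=(\partial\Delta^n)'$ and into the cone over $K_{Pe^n}$ sitting inside $K_{St^{n+1}}$ for the torsion half. Your added details --- the induction over single truncations in (a), the Proposition~\ref{Fposet} computation identifying the full subcomplex on proper subsets of a clique with the permutohedral nerve, and the transfer for a vertex of degree more than $5$ via part (a) --- are sound elaborations of steps the paper's proof leaves implicit rather than a different method.
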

\begin{proof}
Let us prove the statement (a). By Theorem~\ref{Simplextrunc} any nestohedron $P_B$ on a connected building set $B\subset 2^{[n+1]}$ can be obtained as a result of a sequence of face truncations starting with a simplex $\Delta^n$. Thus the nerve complex of our nestohedron $K_P=\partial P^*$ can be obtained from a boundary of a simplex as a result of a number of barycentric subdivisions in some of its simplices. Moreover, Theorem~\ref{Simplextrunc} states that the new vertices (barycenters of those simplices) correspond to the decompositions of the elements in $B_{2}\setminus{B_{1}}$ in a disjoint unions of elements of $B_1$. Applying the descriprion of the face poset of $Q$ in~\cite[Theorem 1.5.13]{TT} finishes the proof as any triangulation of a topological space is homeomorphic to the space itself.

Another way to prove this statement is similar to that of the proof in~\cite[Theorem 4.6.4]{Fe}. Indeed, the centers of the geometric realizations of $P$ and $Q$ in $\mathbb{R}^{n+1}$ are Minkowski sums of the centers of their simplices from the definition of a nestohedron. Then we can translate $P$ and $Q$ so that their centers coincide and project the boundary of $P$ onto the boundary of $Q$ outwards from their common center. Obviously, the image of $P_J$ is in $Q^{n}_{\overline{J}}$ and every facet in $Q^{n}_{\overline{J}}$ contains a point in the image of $P_J$. Finally, we make a continuous bijective transformation of the image (on each of the facet in $Q^{n}_{\overline{J}}$) onto the whole $Q^{n}_{\overline{J}}$.

To prove the statement (b), note, that a nerve complex of $Pe^n$ is a barycentric subdivision of a boundary of $\Delta^n$, and the nerve complex of $St^{n+1}$ contains a cone over the nerve complex of $Pe^n$ as an induced subcomplex (see Figure~\ref{stelfig} for the case $n=2$). Therefore, if $K$ has a torsion element in $H^*(K)$, then the same is true for its triangulation, and by Theorem~\ref{zkcoh}, the statement holds with $K$ being a minimal triangulation of $\mathbb{R}P^2$ on 6 vertices, as its barycentric subdivision is an induced subcomplex in a nerve complex of any $Pe^n, n\geq 5$ (see also~\cite{B-M}). The final part follows from the Alexander duality and the Hochster theorem, and holds for any simple polytope of dimension less than 5.   
\end{proof}

In particular, when $B_{2}=2^{[n+1]}$ and $Q$ is a permutohedron, we get the result of Fenn~\cite[Theorem 4.6.4]{Fe}.
In order to describe the bigraded Betti numbers of associahedra combinatorially we introduce the following notion of a special subgraph $\gamma$ in $\Gamma$.

\begin{defi}
Suppose $\Gamma$ is a graph. For any of its connected subgraphs $\gamma$ one can compute the number $i(\gamma)$ of such connected subraphs $\tilde{\gamma}$ in $\Gamma$ that either $\gamma\cap\tilde{\gamma}\neq\varnothing,\gamma,\tilde{\gamma}$ (in this case we say they have a nontrivial intersection) or $\gamma\cap\tilde{\gamma}=\varnothing$, $\gamma\sqcup\tilde{\gamma}$ is a connected subgraph in $\Gamma$. From now on we describe a subgraph in $\Gamma$ as a vertex set meaning that the subgraph consists of its vertices and all edges in $\Gamma$ connecting these vertices (induced subgraph). 
We denote by $i_{max}=i_{max}(\Gamma)$ the maximal value of $i(\gamma)$ over all connected subgraphs $\gamma$ in $\Gamma$. A connected subgraph $\gamma$, on which $i_{max}$ is achieved, will be called a {\textit{special subgraph}}.
\end{defi}

\begin{exam}
On the Figure~\ref{assfig} we have 3 special subgraphs: $\{1,2\},\{1,4\}$ and $\{2,3\}$. The number $i_{max}$ is equal to 4 and is achieved on $\gamma=\{1,2\}$ the graphs $\tilde{\gamma}$ are: $\{3\},\{4\},\{1,4\},\{2,3\}$ (the latter two intersect $\gamma$ nontrivially).
\end{exam}

The following statement for the bigraded Betti numbers of the type $\beta^{-i,2(i+1)}(P)$ for associahedra $P$ holds.

\begin{theo}\label{bigraded}
Let $P=P_{\Gamma}$ be an associahedron of dimension $n\geq 3$. Then for $i>i_{max}(\Gamma)$ one has:
$$
\beta^{-i,2(i+1)}(P)=0.
$$
Denote the number of special subgraphs in $\Gamma$ by $s$. Let $\omega=-i_{max}, 2(i_{max}+1)$. Then
$$
\beta^{\omega}(P)=s.
$$
\end{theo}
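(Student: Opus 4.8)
The plan is to read these Betti numbers off the combinatorial formula recorded after Theorem~\ref{hoch},
\[
\beta^{-i,2(i+1)}(P)=\sum_{J\subset[m],\,|J|=i+1}\bigl(\cc(P_J)-1\bigr),
\]
and then to convert the right-hand side into crossing combinatorics on the chain $\Gamma$. Since $P_J$ is a union of connected facets, its components are the components of the graph on $J$ whose edges are the pairs of facets with nonempty intersection; hence $\cc(P_J)-1>0$ exactly when this graph, restricted to $J$, is disconnected. Applying Proposition~\ref{Fposet} to pairs of facets, the facets $F_\gamma,F_{\tilde\gamma}$ are \emph{disjoint} if and only if $\gamma,\tilde\gamma$ meet nontrivially as subgraphs, or are vertex-disjoint with connected union; these are precisely the subgraphs counted by $i(\gamma)$. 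Thus $i(\gamma)$ is the number of facets disjoint from $F_\gamma$, and a special subgraph is one whose facet has the maximal number of disjoint partners.

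Next I would use the interval model of an associahedron. The connected subgraphs of the chain are the segments $[a,b]$; encoding $[a,b]$ by the chord $\{g_{a-1},g_b\}$ joining two of the $n+2$ marked points $g_0<\dots<g_{n+1}$, Proposition~\ref{Fposet} translates into: two facets are disjoint iff their chords cross or share an inner endpoint, and they meet iff the chords are nested or lie side by side. A direct count in this picture gives $i([a,b])=d(n+1-d)$ with $d=b-a+1$, so $i_{max}=\max_d d(n+1-d)$ is attained exactly on the segments of extremal length(s), and $s$ is the number of such segments. In this model both assertions become statements about families of pairwise crossing chords.

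For the vanishing I would prove the bound: if the intersection graph on $J$ is disconnected, then $|J|\le i_{max}+1$. Fix a separation $J=A\sqcup B$ into nonempty parts with no edges across, so that every chord of $A$ crosses or abuts every chord of $B$. Ordering the chords by left endpoint and using that crossing/abutting chords have strictly staggered endpoints, the chord $X\in A$ with smallest left endpoint forces every chord of $B$ to pass over a single marked position just to the right of $X$, and an analogous statement holds on the other side. I would then run an extremal (exchange) argument on a separation of maximal total size, showing that such a separation may be chosen with one part equal to a single chord $\gamma$. Once $A=\{\gamma\}$, the other part consists of facets disjoint from $F_\gamma$, whence $|J|=1+|B|\le 1+i(\gamma)\le i_{max}+1$; in particular $\beta^{-i,2(i+1)}(P)=0$ for $i>i_{max}$.

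Finally, for $\beta^{\omega}(P)=s$ I would analyze the equality case $|J|=i_{max}+1$. The bound forces $A=\{\gamma\}$ with $\gamma$ special and $B$ equal to the full set of facets disjoint from $F_\gamma$, so the disconnected sets $J$ of this size are exactly the sets $\{\gamma\}\cup\{\text{facets disjoint from }F_\gamma\}$ for special $\gamma$. It remains to check that for special $\gamma$ this $B$ is connected in the intersection graph, so that $\cc(P_J)=2$ and the term contributes $1$, and that distinct special $\gamma$ give distinct $J$; both follow from the crossing model, since the partners of a length-$d$ chord at the extremal value are linked through their nested/side-by-side relations, and $\gamma$ is recovered from $J$ as its unique isolated vertex. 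Summing the contributions then yields $\beta^{\omega}(P)=s$. The main obstacle throughout is the extremal classification of the previous paragraph — proving that a maximal family of mutually disjoint facets split into two parts must reduce to a single special facet together with all of its disjoint partners; this is where the staggered-endpoint (one-dimensional Helly) structure of crossing chords does the real work.
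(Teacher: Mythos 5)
Your proposal follows essentially the same route as the paper: both reduce the statement via Hochster's formula $\beta^{-i,2(i+1)}(P)=\sum_{|J|=i+1}(\cc(P_J)-1)$ and Proposition~\ref{Fposet} to the claims that a disconnected $P_J$ with $|J|>i_{max}$ must split as a single facet against all facets disjoint from it (that facet corresponding to a special subgraph when $|J|=i_{max}+1$), and both settle these claims in the chord/diagonal model of the associahedron, where $i(\gamma)=d(n+1-d)$ and special subgraphs are the longest diagonals of the $(n+3)$-gon --- the paper by citing \cite[Lemmata 2.13--2.17]{Li}, you by sketching the staggered-endpoint exchange argument that those lemmata carry out. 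Note only that your pivotal extremal step (reducing a maximal two-sided separation to one with a singleton side) is outlined rather than executed, but it is precisely the content of the cited lemmata, so your approach matches the paper's.
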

\begin{proof}
By Theorem~\ref{hoch} and Proposition~\ref{Fposet} it is sufficient to prove the following:
\begin{itemize}
\item[(a)] We have: $\cc(P_J)\leq 2$ if $|J|>i_{max}$. In the latter case, if $P_{J}=P_{J_{1}}\sqcup P_{J_{2}}$ with $|J_{1,2}|\geq 2$, then there exists another $J'\subset B(\Gamma)$ such that $P_{J'}=P_{J'_{1}}\sqcup P_{J'_{2}}$ with $|J'_{1}|=1$ and $|J'|>|J|$.
 
\item[(b)] Suppose $\cc(P_J)=2, P_{J}=P_{J_{1}}\sqcup P_{J_{2}}, |J|>i_{max}$. Then either $|J_{1}|=1$ or $|J_{2}|=1$. Moreover, if $|J|=i_{max}+1, |J_{1}|=1$ then $J_{1}$ consists of a special subgraph of $\Gamma$ and $J_{2}$ consists of all the $i_{max}$ connected subgraphs in $\Gamma$ determined in the definition of a special graph above.
 
\item[(c)] Suppose $|J|>i_{max}+1$. Then $\cc(P_J)=1$.
\end{itemize}

For an associahedron $As^{n}$ the statement (a) follows from~\cite[Lemmata 2.13, 2.14]{Li}, the statement (b) follows from~\cite[Lemmata 2.15, 2.16]{Li} and the statement (c) follows from~\cite[Lemma 2.17]{Li}.  
\end{proof}

\begin{rema}
Using Propostion~\ref{Fposet} one can see easily that Theorem~\ref{bigraded} states the last nonzero bigraded Betti number $\beta^{\omega}(P)$ in the sequence of $\beta^{-i,2(i+1)}(P),1\leq i\leq m-n$ to be achieved precisely on $P_J$ which is a union of a facet of $P$ corresponding to a special subgraph in $\Gamma$ and all the facets of $P$ that do not intersect this facet. All the $P_J$ with a greater cardinality $|J|$ of $J$ are connected spaces in $\mathbb{R}^n$. An argument similar to that in the proof of Theorem~\ref{bigraded} shows the same holds for a permutohedron $Pe^n, n\geq 3$ and applying Proposition~\ref{Pe} one can get the same result for any graph-associahedron on a connected graph $\Gamma$.
\end{rema}

As an application of Theorem~\ref{bigraded} the values of $i_{max}(\Gamma)$ and $s$ can be computed explicitly in terms of the combinatorics of the graph $\Gamma$. Using induction on the polytope dimension $n$ for combinatorial enumerations in $\Gamma$ it can be seen that a special graph $\gamma$ is a path graph in $\Gamma$ on either $[\frac{n+1}{2}]$ or $[\frac{n}{2}]+1$ vertices. This follows also from the proof of~\cite[Theorem 2.9]{Li} where the special graphs correspond to the longest diagonals in a regular $(n+3)$-gon $G$ and the numbers of the vertices in such a graph are the numbers of vertices of $G$ lying in one of the open halves of $G$ divided by the diagonal. Thus, we get the following result, cf.~\cite[Theorem 2.9]{Li}

\begin{coro}
For an associahedron $P_{\Gamma}$ of dimension $n\geq 3$ one has the following values of $i_{max}=q(n)$ and $s$.  

\begin{align*}
  &\beta^{-q,2(q+1)}(As^{n})=\begin{cases}
  n+3,&\text{if $n$ is even;}\\
  \frac{n+3}{2},&\text{if $n$ is odd;}
  \end{cases}\\
  &\beta^{-i,2(i+1)}(As^{n})=0\quad\text{for }i\ge{q+1},
\end{align*}
where $q=q(n)$ is: 
\begin{align*}
&q=q(n)=\begin{cases}
  \frac{n(n+2)}{4},&\text{if $n$ is even;}\\
  \frac{(n+1)^2}{4},&\text{if $n$ is odd.}
  \end{cases}
\end{align*}
\end{coro}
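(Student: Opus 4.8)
The plan is to derive the corollary entirely from Theorem~\ref{bigraded}: once the value $i_{max}(\Gamma)$ and the number $s$ of special subgraphs are known for the chain graph $\Gamma$ underlying $As^n$, the vanishing statement $\beta^{-i,2(i+1)}(As^n)=0$ for $i\ge q+1$ is precisely the first assertion of Theorem~\ref{bigraded} with $q=i_{max}$, and the value of $\beta^{-q,2(q+1)}(As^n)$ equals $s$ by its second assertion. Thus the whole corollary reduces to two elementary counts on a path.

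First I would fix notation. Since $As^n=P_\Gamma$ with $\Gamma$ a chain on $n+1$ vertices, its connected induced subgraphs are exactly the subpaths, which I describe as integer intervals $[a,b]$ in the linear order of $\Gamma$; the facets correspond to the proper intervals. For an interval $\gamma$ of cardinality $\ell$ I write $L$ and $R$ for the numbers of vertices of $\Gamma$ lying, respectively, to the left and to the right of $\gamma$, so that $L+\ell+R=n+1$. The next step is to compute $i(\gamma)$ by splitting the definition into its two alternatives. The connected subgraphs $\tilde\gamma$ meeting $\gamma$ nontrivially are exactly the intervals overlapping $\gamma$ on one side without containment, and there are $L(\ell-1)$ of them on the left and $R(\ell-1)$ on the right; the intervals disjoint from $\gamma$ whose union with $\gamma$ is connected are exactly those abutting $\gamma$, contributing $L$ on the left and $R$ on the right. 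Summing gives $i(\gamma)=(L+R)(\ell-1)+(L+R)=(L+R)\ell=\big((n+1)-\ell\big)\ell$, which, crucially, depends only on the cardinality $\ell$ and not on the position of $\gamma$.

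It then remains to maximize $f(\ell)=\big((n+1)-\ell\big)\ell$ over $1\le\ell\le n$ and to count the maximizers. The parabola $f$ peaks at $\ell=(n+1)/2$: for $n$ odd this is an integer, giving $i_{max}=f\big(\tfrac{n+1}{2}\big)=\tfrac{(n+1)^2}{4}$, while for $n$ even the two nearest integers $\ell=\tfrac n2$ and $\ell=\tfrac n2+1$ both attain the common maximum $i_{max}=\tfrac{n(n+2)}{4}$. In both cases $i_{max}=q(n)$, which yields the claimed vanishing. The special subgraphs are precisely the intervals of the maximizing cardinality, and a path on $n+1$ vertices has $n+2-\ell$ intervals of length $\ell$; hence for $n$ odd we obtain $s=n+2-\tfrac{n+1}{2}=\tfrac{n+3}{2}$, and for $n$ even we add the two counts to get $s=\big(n+2-\tfrac n2\big)+\big(n+2-\tfrac n2-1\big)=n+3$. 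Feeding $i_{max}=q$ and these values of $s$ into Theorem~\ref{bigraded} gives the stated formulas.

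The only delicate point is the bookkeeping in the computation of $i(\gamma)$: one must verify that the ``nontrivial intersection'' and ``disjoint-with-connected-union'' cases are mutually exclusive and together account for every contributing $\tilde\gamma$, and that no interval is miscounted at the endpoints (in particular the whole chain, which is not a facet, contributes $i=0$ and never arises as a maximizer). Once the clean formula $i(\gamma)=\big((n+1)-\ell\big)\ell$ is in place, the maximization and the interval count are routine, and the result matches the description of special subgraphs via the longest diagonals of the regular $(n+3)$-gon recorded in the preceding remark. I therefore expect no essential obstacle beyond this careful case analysis.
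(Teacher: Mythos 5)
Your proposal is correct, and the reduction is the intended one: both the vanishing for $i\ge q+1$ and the value at $i=q$ are immediate from Theorem~\ref{bigraded} once $i_{max}$ and $s$ are computed for the chain. Your bookkeeping checks out: for an interval $\gamma$ of cardinality $\ell$ with $L$ and $R$ outside vertices, the one-sided overlaps contribute $(L+R)(\ell-1)$, the abutting disjoint intervals contribute $L+R$, the two cases are mutually exclusive (a simultaneous left- and right-overhang would force $\tilde\gamma\supset\gamma$, which is excluded as a trivial intersection), and so $i(\gamma)=\ell\bigl((n+1)-\ell\bigr)$; maximizing over $\ell$ and counting the $n+2-\ell$ intervals of each extremal length reproduces $q(n)$ and $s$ in both parities, and agrees with the paper's worked example for $n=3$ ($i_{max}=4$, $s=3$, special subgraphs of cardinality $2$). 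Where you genuinely diverge from the paper is in how the count is established: the paper only sketches it, appealing to an induction on the polytope dimension $n$ for the enumerations in $\Gamma$ and, alternatively, to the proof of \cite[Theorem 2.9]{Li}, where facets of $As^n$ are modeled by diagonals of a regular $(n+3)$-gon and the special subgraphs correspond to the longest diagonals, with the vertex counts $\lfloor\frac{n+1}{2}\rfloor$ and $\lfloor\frac{n}{2}\rfloor+1$ read off from the two open halves of the polygon. Your closed-form, position-independent formula $i(\gamma)=\ell((n+1)-\ell)$ replaces both the induction and the external citation: it is self-contained, stays entirely in the graph model of Proposition~\ref{Fposet}, and makes transparent why the maximizer is unique for odd $n$ but splits into two consecutive lengths for even $n$. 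What the paper's route buys instead is the geometric interpretation recorded in the remark before the corollary (special subgraphs as longest diagonals of the $(n+3)$-gon), which your computation confirms but never needs.
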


As graph-associahedra are flag polytopes, we can apply the previous result to studying the loop homology algebra $H_{*}(\Omega\zp)$ for associahedra $P$. Namely, due to~\cite[Theorem 4.3]{G-P-T-W} the minimal number of multiplicative generators of $H_{*}(\Omega\zp)$ is equal to $\sum\limits_{i=1}^{m-n}\beta^{-i,2(i+1)}(P)$. Then Theorem~\ref{bigraded}
gives us lower bounds for the number of multiplicative generators in the Pontryagin algebra of $\zp$.

\section{Massey products}

In this section we prove the main result of this article concerning Massey higher products in $H^{*}(\mathcal Z_P)$ (Theorem~\ref{mainMassey}), first stated in~\cite{Li2}, and a criterion for a nontrivial triple Massey product of 3-dimensional classes to exist in $H^{*}(\mathcal Z_{P_{\Gamma}})$ (Proposition~\ref{assocMassey}). 
We first prove the statement on triple Massey products in the graph-associahedron $P_{\Gamma}$ case, where $\Gamma$ is an arbitrary (possibly disconnected) graph.

Let us state the following theorem due to Denham and Suciu which gives a combinatorial criterion for a simplicial complex $K$ to produce a nontrivial triple Massey product of 3-dimensional classes in $H^{*}(\mathcal Z_K)$.

\begin{theo}[{\cite[Theorem 6.1.1]{DS}}]\label{DenSuc}
The following are equivalent:
\begin{itemize}
\item[(1)] There exist cohomology classes $\alpha_{i}\in H^{3}(\mathcal Z_K), i=1,2,3$ for which $\langle\alpha_{1},\alpha_{2},\alpha_{3}\rangle$ is defined and nontrivial.
\item[(2)] The underlying graph (1-skeleton) of $K$ contains an induced subgraph isomorphic to one of the five graphs in Figure~\ref{DenhamSuciu}.
\end{itemize}
Moreover, all Massey products arising in this fashion are decomposable.
\end{theo}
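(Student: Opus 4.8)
The plan is to push the whole question through the cochain model of $H^*(\zk)$ from Theorem~\ref{zkcoh}, keeping careful track of the multigrading of Theorem~\ref{mgrad}, so that it localizes on a full subcomplex spanned by at most six vertices and becomes a finite combinatorial problem about the $1$-skeleton of $K$. First I would identify the degree-$3$ classes. By Hochster's formula (Theorem~\ref{hoch}) a nonzero summand of $H^3(\zk)$ comes from $\widetilde H^{2-|I|}(K_I)$, which forces $|I|=2$ and $\widetilde H^0(K_{\{i,j\}})\ne 0$; hence $H^3(\zk)\cong\bigoplus_{\{i,j\}}\widetilde H^0(K_{\{i,j\}})$, one copy of $\ko$ for each \emph{missing edge} $\{i,j\}$ of the $1$-skeleton. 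Using multihomogeneity I may assume each $\alpha_k$ is supported on a single missing edge $m_k=\{a_k,b_k\}$, represented in $R(K)$ by the cocycle $z_k:=u_{a_k}v_{b_k}$ (the identity $d(u_{a_k}u_{b_k})=v_{a_k}u_{b_k}-u_{a_k}v_{b_k}$ shows the two natural representatives are cohomologous).

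Next I would translate the hypotheses combinatorially. The product $\alpha_i\alpha_j$ is multihomogeneous of multidegree $m_i+m_j$ and lands, when $m_i\cap m_j=\varnothing$, in $\widetilde H^1(K_{m_i\cup m_j})$; via the join map $(*)$ of Theorem~\ref{zkcoh} it is the class of the $4$-cycle on $\{a_i,b_i,a_j,b_j\}$ built from the four cross-edges, so $\alpha_i\alpha_j=0$ exactly when that $4$-cycle either fails to close or bounds in $K$. The triple product $\langle\alpha_1,\alpha_2,\alpha_3\rangle$ is multihomogeneous of multidegree $m_1+m_2+m_3$; by Theorem~\ref{mgrad} the ambient group vanishes unless this is a $(0,1)$-vector, which forces the three missing edges to be pairwise disjoint, so $W:=m_1\cup m_2\cup m_3$ has six vertices and the product is detected in $\widetilde H^1(K_W)$. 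Everything therefore lives on the induced subcomplex $K_W$, and I would show that requiring the two cup products to vanish while keeping the $\alpha_k$ nonzero pins down the admissible $2$-faces, so the criterion can be stated purely in terms of the induced subgraph $\Gamma=K_W^{(1)}$.

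For the implication $(2)\Rightarrow(1)$ I would treat each of the five graphs of Figure~\ref{DenhamSuciu} in turn: take the cocycles $z_k=u_{a_k}v_{b_k}$, exhibit bounding cochains $c_{12},c_{23}$ with $dc_{12}=z_1z_2$ and $dc_{23}=z_2z_3$ inside $R(K_W)$, and check that $z_1 c_{23}\pm c_{12}z_3$ is neither a coboundary nor contained in the indeterminacy $\alpha_1 H^5+H^5\alpha_3$ -- a finite verification. For $(1)\Rightarrow(2)$ I would invoke the reduction above and then enumerate: among six-vertex graphs carrying three disjoint missing edges with $\alpha_1\alpha_2=\alpha_2\alpha_3=0$, single out those whose Massey product survives modulo indeterminacy, and match the outcome against the five graphs. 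Decomposability is then read off the same computation: since the output sits in $\widetilde H^1(K_W)$ with $|W|=6$, in each case one finds a partition $W=U\sqcup V$ with $U$ a missing edge and $K_V$ disconnected such that the Massey representative is cohomologous to the cup product of the associated classes in $H^3$ and $H^5$, exhibiting it as decomposable.

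The genuinely delicate step is $(1)\Rightarrow(2)$ together with the indeterminacy bookkeeping: one must guarantee that survival of the Massey product is an invariant of $\Gamma$ rather than an artifact of the choices of $c_{12},c_{23}$, and then carry out the finite but intricate enumeration of six-vertex configurations, verifying that it collapses to exactly the five graphs.
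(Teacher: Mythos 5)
There is no in-paper proof to compare against: the paper states Theorem~\ref{DenSuc} as a direct quotation of \cite[Theorem~6.1.1]{DS} and uses it as a black box in the proof of Proposition~\ref{assocMassey}. The relevant benchmark is therefore the original Denham--Suciu argument, and your outline does reconstruct its actual strategy: identify $H^3(\zk)$ with the missing edges of the $1$-skeleton via Hochster's formula, use the multigrading (Theorem~\ref{mgrad}) to force the three supporting edges to be pairwise disjoint and to localize the entire computation on the full subcomplex $K_W$ on six vertices, and then reduce to a finite classification that produces the five graphs of Figure~\ref{DenhamSuciu}. Your translation of the cup-product condition is also correct, and in fact cleaner than you state: since both diagonals of the quadrilateral on $m_i\cup m_j$ are missing edges, $K_{m_i\cup m_j}$ can contain no $2$-simplex, so ``bounds in $K$'' is vacuous and $\alpha_i\alpha_j=0$ exactly when some cross-edge is absent.

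That said, as a proof your text defers precisely the steps that constitute the theorem. First, the reduction ``I may assume each $\alpha_k$ is supported on a single missing edge'' is not automatic: Massey triple products are not linear in their slots, so you need the standard containment relations for sums of arguments together with the observation that defining systems can be chosen multihomogeneously (the differential preserves the multigrading), and you should say so rather than assume it. Second, and more seriously, the claim that the criterion ``can be stated purely in terms of the induced subgraph $\Gamma=K_W^{(1)}$'' is the crux, not a formality: the enumeration is a priori over induced \emph{subcomplexes} $K_W$, and when $\Gamma$ contains triangles, $K$ may or may not include the corresponding $2$-simplices, which changes $\widetilde{H}^1(K_W)$; one must verify that nontriviality modulo indeterminacy is insensitive to this choice, which is genuine content of the Denham--Suciu proof. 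Third, the $(1)\Rightarrow(2)$ enumeration, the verification that survival of the product is independent of the bounding cochains $c_{12},c_{23}$, and the decomposability claim (``in each case one finds a partition $W=U\sqcup V$'') are all asserted rather than carried out --- you flag this yourself. As a faithful outline of the Denham--Suciu route the proposal is sound; as a standalone proof it has these concrete gaps.
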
 

\begin{figure}[h]
\includegraphics[scale=0.5]{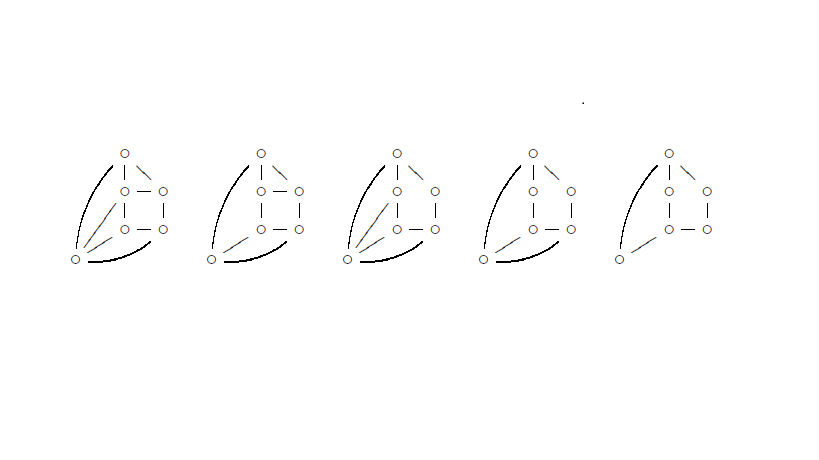}
\caption{The five obstruction graphs.}
\label{DenhamSuciu}
\end{figure}

Applying Theorem~\ref{DenSuc} to the graph-associahedra case, we get the following result.

\begin{prop}\label{assocMassey}
There is a nontrivial triple Massey product $\langle\alpha_{1},\alpha_{2},\alpha_{3}\rangle$ of 3-dimensional cohomology classes $\alpha_{i}\in H^{3}(\mathcal Z_{P_{\Gamma}})$ for $i=1,2,3$ if and only if there is a connected component of $\Gamma$ on $m\geq 4$ vertices which is different from a complete graph $K_{4}$. 
\end{prop}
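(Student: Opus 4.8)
The plan is to reduce Proposition~\ref{assocMassey} to the Denham--Suciu criterion (Theorem~\ref{DenSuc}) by analyzing the 1-skeleton of the nerve complex $K_{P_\Gamma}$. By Theorem~\ref{DenSuc}, a nontrivial triple Massey product of 3-dimensional classes exists in $H^*(\mathcal Z_{P_\Gamma})$ if and only if the 1-skeleton of $K_{P_\Gamma}$ contains an induced subgraph isomorphic to one of the five obstruction graphs of Figure~\ref{DenhamSuciu}. So the entire task is combinatorial: I must understand the 1-skeleton of $K_{P_\Gamma}$, whose vertices are the facets of $P_\Gamma$ (equivalently, the non-maximal connected subgraphs of $\Gamma$, by Proposition~\ref{Fposet}), and whose edges record which pairs of facets have nonempty intersection (equivalently, whose \emph{non-edges} record minimal non-faces of $K_{P_\Gamma}$). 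The adjacency rule is given by condition~(1) of Proposition~\ref{Fposet}: two facets corresponding to connected subgraphs $\Gamma_i,\Gamma_j$ are adjacent in $K_{P_\Gamma}$ iff either they share no common vertex or one is contained in the other.

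First I would translate the five obstruction graphs into the language of $\Gamma$. Each obstruction graph is a prescribed pattern of edges and non-edges among a small number of facets; a non-edge corresponds to a pair of connected subgraphs $\Gamma_i,\Gamma_j$ that have a common vertex but neither contains the other (a ``crossing'' pair in the nestohedron sense). So I would first isolate which crossing patterns of small connected subgraphs can occur, and show that the presence of such a pattern is governed by whether some connected component of $\Gamma$ is large enough and is not the complete graph $K_4$. For the ``if'' direction, assuming a component $\Gamma_0$ on $m\ge 4$ vertices that is not $K_4$, I would exhibit an explicit collection of connected subgraphs of $\Gamma_0$ (typically a handful of edges and paths) whose induced subgraph in the 1-skeleton of $K_{P_\Gamma}$ matches one of the five obstruction graphs; here a natural case split is between $m=4$ (where $\Gamma_0$ is one of finitely many graphs: a path, a cycle, a star, a ``paw,'' etc., all checkable by hand) and $m\ge 5$, where one can always find a long enough path or an appropriate pair of crossing subpaths to realize the obstruction. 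For the ``only if'' direction, I would show that if every component is either on fewer than $4$ vertices or is exactly $K_4$, then no induced copy of any obstruction graph can appear: a component on $\le 3$ vertices yields too few or too ``nested'' facets, and for $K_4$ one checks directly (finite computation) that the 1-skeleton of the corresponding associahedron nerve contains none of the five patterns --- indeed $K_4$ gives the permutohedron $Pe^3$, where by the remark after Proposition~\ref{Fposet} facets intersect exactly when one subgraph contains the other, so the crossing structure is too restricted.

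The main obstacle will be the ``if'' direction in its full generality: verifying that \emph{every} connected graph on $m\ge 4$ vertices other than $K_4$ produces at least one of the five obstruction graphs as an induced subgraph of the 1-skeleton. The difficulty is that the family of such $\Gamma_0$ is infinite and varied, so I cannot simply enumerate; instead I would argue structurally. The cleanest approach is to reduce to a few generating configurations: if $\Gamma_0$ is not complete, it contains two vertices at distance $\ge 2$, which forces a pair of crossing connected subgraphs realizing the simplest obstruction, while if $\Gamma_0$ is complete on $m\ge 5$ vertices one uses a different explicit crossing family (e.g. two overlapping triangles sharing an edge). I would organize these into the case split $m=4$ versus $m\ge 5$, complete versus non-complete, handle $m=4$ by direct inspection of the finitely many graphs, and handle $m\ge 5$ by the structural crossing arguments, so that the only excluded case in the whole range $m\ge 4$ is precisely $K_4$. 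Care is needed to ensure the chosen subgraphs induce \emph{exactly} the obstruction pattern (that the edges forced by Proposition~\ref{Fposet}(1) agree with the required adjacencies and non-adjacencies), and that condition~(2) of Proposition~\ref{Fposet} does not interfere, since the triple Massey product criterion only concerns the 1-skeleton.
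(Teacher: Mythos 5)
Your overall strategy --- reducing everything to the Denham--Suciu criterion (Theorem~\ref{DenSuc}) via the combinatorics of the 1-skeleton of $K_{P_\Gamma}$ --- is the same as the paper's, but your proposal rests on a wrong adjacency rule, and this is a genuine gap rather than a presentational one. You state that two facets are adjacent in $K_{P_\Gamma}$ iff the corresponding connected subgraphs ``share no common vertex or one is contained in the other,'' and you explicitly dismiss condition~(2) of Proposition~\ref{Fposet} as not affecting the 1-skeleton. But condition~(2), applied to a pair ($l=2$), is precisely what decides adjacency for vertex-disjoint subgraphs: the two facets meet iff the union graph is \emph{disconnected}. Under the correct rule, in a complete graph $\Gamma=K_{n+1}$ any two disjoint subsets have connected union, so disjoint subgraphs give \emph{non}-adjacent facets --- which is exactly the content of the remark after Proposition~\ref{Fposet} that you quote, in direct contradiction with your own stated rule. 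The error is not cosmetic: for instance, the paper's realization of the first obstruction graph of Figure~\ref{DenhamSuciu} inside the nerve of the 4-dimensional permutohedron, on the vertices $\{1\},\{2\},\{1,3\},\{1,2,4\},\{1,2,3,4\},\{1,2,3,5\}$, relies on non-edges such as $\{1\}$--$\{2\}$ and $\{2\}$--$\{1,3\}$ coming from disjoint pairs with connected union; under your rule these would be edges, and your whole ``crossing pairs only'' bookkeeping of non-edges miscomputes the graphs you then try to match against the five obstructions, in both directions of the equivalence.

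Two further steps are under-developed even after repairing the adjacency rule. For connected non-complete $\Gamma$ on $m\geq 5$ vertices, your plan (``two vertices at distance $\geq 2$ forces a crossing pair realizing the simplest obstruction'') only produces a single non-edge, whereas each of the five obstruction graphs is a specific six-vertex pattern with several prescribed non-edges; the paper avoids this difficulty entirely by choosing a connected \emph{induced} subgraph $\gamma$ on 4 vertices (which exists, and can be taken $\neq K_4$ unless $\Gamma$ is complete) and observing that, because $\gamma$ is induced, the facets of $P_\Gamma$ indexed by connected subgraphs of $\gamma$ span an induced subcomplex identical to the already-settled 4-vertex case --- so the only genuinely new configuration is $\Gamma=K_{n+1}$, $n\geq 4$, handled by the explicit six-vertex list above; your ``two overlapping triangles sharing an edge'' for complete graphs is again only one non-edge, not an obstruction pattern. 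Finally, you never address the disconnected case: since $P_{\Gamma_1\sqcup\Gamma_2}=P_{\Gamma_1}\times P_{\Gamma_2}$ and the nerve of a product is a join, every cross-component pair of vertices is an edge, and one must either rule out obstruction graphs splitting as joins or, as the paper does, invoke the fact that the functor $\mathcal Z$ preserves products together with Theorem~\ref{zkcoh} (and formality of the small factors) to reduce to the connected case.
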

\begin{proof}
We start with a connected graph $\Gamma$ case. Suppose the number of vertices in $\Gamma$ is less than 4. Then $P_{\Gamma}$ is either a point, a segment, a pentagon or a hexagon. The corresponding moment-angle manifold $\zp$ is either a disk $D^{2}$, a sphere $S^3$, or a connected sum of products of spheres, respectively (\cite{McG},\cite{B-M}). These manifolds are formal spaces, therefore, there are no nontrivial higher Massey products in $H^{*}(\zp)$. 

Suppose there are 4 vertices in $\Gamma$. There are 6 combinatorially different connected graphs $\Gamma$ on 4 vertices, thus, giving 6 combinatorially different 3-dimensional graph-associahedra $P_{\Gamma}$. If $\Gamma$ is a complete graph $K_4$ then $P=P_{\Gamma}$ is a permutohedron and the boundary of its dual simplicial polytope $K=K_P$ is combinatorially equivalent to a barycentric subdivision of a boundary of a 3-simplex. As there are no induced 5-cycles in $K$ and the first two graphs in Figure~\ref{DenhamSuciu} can not also be induced graphs in $K$, by Theorem~\ref{DenSuc} there are no nontrivial triple Massey products in $H^{*}(\zp)$. On the other hand, using Fugures~\ref{permfig}, \ref{stelfig}, \ref{cyclfig}, \ref{assfig} and Theorem~\ref{Simplextrunc} one can check easily that the third (middle) of the five graphs in the Figure~\ref{DenhamSuciu} is an induced subgraph in the underlying graph (1-skeleton) of $K_P$ for $P$ being any of the other five 3-dimensional graph-associahedra on a connected graph with 4 vertices. The case of a connected graph on 4 vertices now holds from Theorem~\ref{DenSuc}.    

Suppose now, that $\Gamma$ is a connected graph on more than 4 vertices. Using induction on the number of edges in $\Gamma$ we get an induced subgraph $\gamma$ in $\Gamma$ on 4 vertices. Using Proposition~\ref{Fposet} the induced subcomplex in $K_P, P=P_{\Gamma}$ on the vertex set corresponding to all connected subgraphs in $\gamma\neq K_{4}$ will give us a nontrivial triple Massey product in $H^{*}(\zp)$ by the argument above. On the other hand, if any connected subgraph on 4 vertices in $\Gamma$ is a complete graph $K_4$ then $\Gamma$ is a complete graph $K_{n+1}$. Indeed, consider two different vertices $\alpha$ and $\beta$ in $\Gamma$. Then there is a connected subgraph containing them in $\Gamma$. Such a graph $\gamma$ with a minimal number of edges will obviously be a path between $\alpha$ and $\beta$. If it has more than 2 edges then it has more than 3 vertices and thus contains $K_4$ as an induced graph on some 4 of its vertices, thus $\gamma$ being not minimal (any pair of vertices in $K_4$ is connected by one edge). Similarly, if $\gamma$ has 2 edges then one of its 3 vertices is conected to another vertex of $\Gamma$ (as $\Gamma$ has more than 4 vertices and is connected) and we get $K_4$ as an induced subgraph. So, $\gamma$ is not minimal again. Thus, $\gamma$ has one edge, i.e., $\alpha$ and $\beta$ are connected by an edge in $\Gamma$ and $\Gamma$ is a complete graph.   

It remains to consider the case when $\Gamma$ is a complete graph $K_{n+1}, n\geq 4$ and $P=P_{\Gamma}$ is a permutohedron. Note, that $K_{Q}$ is an induced subcomplex in $K_{P}$ for any such $P$ when $Q$ is a 4-dimensional permutohedron. Consider the graph $\Gamma=K_{5}$ for $Q$ and an induced subcomplex of $K_Q$ on the following vertices: 
$$
\{1\},\{2\},\{1,3\},\{1,2,4\},\{1,2,3,4\},\{1,2,3,5\}. 
$$ 
One can see easily that its graph is the first (left) graph in Figure~\ref{DenhamSuciu}. By Theorem~\ref{DenSuc} and Theorem~\ref{zkcoh} (formula (*)), any permutohedron $P$ of dimension 4 and greater gives us a nontrivial triple Massey product in $H^{*}(\zp)$.

Finally, the case of a disconnected graph $\Gamma$ follows from Proposition~\ref{Fposet} and Theorem~\ref{zkcoh} and the connected graph case as if two graphs $\Gamma_1$ and $\Gamma_2$ are disjoint then for their union graph $\Gamma$ one gets: $P_{\Gamma}=P_{\Gamma_{1}}\times P_{\Gamma_{2}}$ and the moment-angle map $\mathcal Z$ preserves products~\cite[Ch. 4]{TT}. This finishes the proof. 
\end{proof}

\begin{rema}
Note, that each of the six 3-dimensional graph-associahedra $P=P_{\Gamma}$ mentioned above is a 2-truncated cube and, moreover, $Pe^3$ can be obtained from $Cy^3$ by cut off its 2 non-adjacent edges, if realized as a simplex truncation (see Theorem~\ref{Simplextrunc} and Figures~\ref{cyclfig}, \ref{permfig}). As $\zp$ for $P=I^n$ is a product of spheres and, therefore, is a formal manifold, it follows that a nontrivial higher Massey product in $H^{*}(\zp)$ can either appear or vanish after a (codimension 2) face truncation (or, equivalently, after a stellar subdivision in the dual simplicial sphere $K_P$).  
\end{rema}

\begin{exam}
Consider $P=Pe^3$, see Figure~\ref{permfig}. It has $n=3$ and $m=14$, let us label its facets by the numbers $1,\ldots,14$ such that the bottom and upper 6-gon facets are 1 and 14 resp., the bottom facets are labeled by $2,\ldots,7$ and the upper facets are labeled by $8,\ldots,13$, both clockwisely.

Consider the following 3-cocycles: 
$$
a_{1}=v_{1}u_{14}, a_{2}=v_{6}u_{10}, a_{3}=v_{8}u_{4}, a_{4}=v_{2}u_{12}.
$$    
They correspond to 4 pairs of parallel facets of $P$ if realized as a result of face truncations from $\Delta^3$. Suppose they are representatives of the cohomology classes $\alpha_{i}\in H^{3}(\zp)$, that is, $\alpha_{i}=[a_i]$ for $i=1,\ldots,4$. 

Then we get the following defining system $A$ (see~\cite{Kr}) for the Massey 4-product $\langle\alpha_{1},\alpha_{2},\alpha_{3},\alpha_{4}\rangle$ (up to signs):
$$
a_{13}=v_{6}u_{1}u_{14}u_{10}, a_{24}=v_{6}u_{10}u_{8}u_{4}, a_{35}=v_{2}u_{8}u_{4}u_{12},
$$
$$
a_{14}=v_{6}u_{1}u_{8}u_{4}u_{10}u_{14}, a_{25}=0,
$$
so $0\in\langle\alpha_{1},\alpha_{2},\alpha_{3},\alpha_{4}\rangle$. Thus the two 3-products $\langle\alpha_{1},\alpha_{2},\alpha_{3}\rangle$ and $\langle\alpha_{2},\alpha_{3},\alpha_{4}\rangle$ are defined and vanish simultaneously and the 4-product $\langle\alpha_{1},\alpha_{2},\alpha_{3},\alpha_{4}\rangle$ is defined and trivial. 
\end{exam}

\begin{rema}
Note, that the same calculation works in full generality, namely: if $P=Pe^n, n\geq 2$ and the classes $\alpha_{i}\in H^{3}(\zp),1\leq i\leq n+1$ are represented by $(n+1)$ pairs of the parallel permutohedra facets (cf. Figure~\ref{permfig}), then $\langle\alpha_{1},\ldots,\alpha_{n+1}\rangle$ is defined and trivial. Similarly, if $P=St^n, n\geq 2$ and the classes $\alpha_{i}\in H^{3}(\zp),1\leq i\leq n$ are represented by $n$ pairs of the parallel stellahedra facets (cf. Figure~\ref{stelfig}), then $\langle\alpha_{1},\ldots,\alpha_{n}\rangle$ is defined and trivial.
\end{rema}

We next consider a particular family of 2-truncated $n$-cubes $\mathcal P$, one for each dimension $n$, for which $\mathcal Z_{\mathcal P}$ has a nontrivial Massey product of order $n$. 

\begin{defi}
Suppose $I^{n}=[0,1]^n, n\geq 2$ is an $n$-dimensional cube with facets $F_{1},\ldots,F_{2n}$, such that $F_{i},1\leq i\leq n$ contain the origin 0, a unit inner normal vector to $F_{i},1\leq i\leq n$ is $(0,\ldots,1,\ldots,0)$ with 1 in the $i$th position, $F_{i}$ and $F_{n+i}$, $1\leq i\leq n$ being parallel. Then we define $\mathcal P$ as a result of a consecutive cut of faces of codimension 2 from $I^{n}$, having the following Stanley-Reisner ideal:
$$
I=(v_{k}v_{n+k+i},0\leq i\leq n-2,1\leq k\leq n-i,\ldots),
$$
where $v_{i}$ correspond to $F_{i}$, $1\leq i\leq 2n$ and in the dots are the monomials corresponding to the new facets (i.e., facets obtained after performing truncations). This determines uniquely the combinatorial type of $\mathcal P$. 
\end{defi}

\begin{exam}
For $n=2$ we get a 2-dimensional cube (the square) $\mathcal P$ and its Stanley-Reisner ideal is the following one:
$$
I=(v_{1}v_{3},v_{2}v_{4}).
$$
For $n=3$ we get a simple polytope $\mathcal P$ from Figure~\ref{masseyfig}, for which $K=K_{\mathcal P}$ is a simplicial complex with a nontrivial triple Massey product in $H^{*}(\zk)$ due to the result of Baskakov, see~\cite{BaskM}. Moreover, using the computer software {\textit{Plantri}} it can be seen that $K$ is the only one of the 14 combinatorially different 2-spheres with 8 vertices giving nontrivial higher Massey products in $\mathcal Z_K$, see~\cite{DS}.\\
The Stanley-Reisner ideal of $\mathcal P$ can be written as follows, see Figure~\ref{masseyfig}:
$$
I=(v_{1}v_{4},v_{2}v_{5},v_{3}v_{6},v_{1}v_{5},v_{2}v_{6},w_{1}v_{3},w_{1}v_{5},w_{2}v_{2},w_{2}v_{4},w_{1}w_{2}).
$$  
\end{exam}

\begin{rema}
The 2-truncated cube $\mathcal P$ is not a graph-associahedron as its number of facets $f_{0}(\mathcal P)=\frac{n(n+3)}{2}-1<f_{0}(As^{n})=\frac{n(n+3)}{2}$, see~\cite[Theorem 9.2]{bu-vo11}.
However, we can easily construct the building set $B$ for $\mathcal P$ on the vertex set $[n+1]$ by identifying $F_{i}$ with $\{1,\ldots,i\}$ for $1\leq i\leq n$ and identifying $F_{i}$ with $\{i-n+1\}$ for $n+1\leq i\leq 2n$. Then, by Theorem~\ref{BuVo}, we consecutively cut the following faces:
$$
\{1\}\sqcup\{3\}, \{1,2\}\sqcup\{4\},\ldots,\{1,\ldots,n-1\}\sqcup\{n+1\}\\
$$
$$
\cdots\\
$$
$$
\{1\}\sqcup\{n\}, \{1,2\}\sqcup\{n+1\}. 
$$
Thus, $\mathcal{P}=P_{B}$ for the the building set $B$ consisting of the building set $B_0$ of an $n$-cube from Example~\ref{Bcube,simplex}, the above subsets of $[n+1]$ and all the subsets of $[n+1]$ which are the unions of nontrivially intersecting elements in $B$. 
\end{rema}

\begin{theo}\label{mainMassey}
Let $\alpha_i\in H^{3}(\mathcal Z_{\mathcal P})$ be represented by a 3-cocycle $v_{i}u_{n+i}$ for $1\leq i\leq n$ and $n\geq 2$. Then all Massey products of consecutive elements from $\alpha_{1},\ldots,\alpha_{n}$ are defined and the whole $n$-product $\langle\alpha_{1},\ldots,\alpha_{n}\rangle$ is nontrivial.
\end{theo}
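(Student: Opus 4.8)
The plan is to compute directly in the Koszul-type differential bigraded algebra of Theorem~\ref{zkcoh}, or rather in its multigraded incarnation $R(K)$ from Theorem~\ref{mgrad}, and to exploit the multigrading at every stage. First I record that each $\alpha_i$ is a genuine cocycle: since $d(v_iu_{n+i})=v_iv_{n+i}$ and the facets $F_i,F_{n+i}$ are parallel, $v_iv_{n+i}$ lies in the Stanley--Reisner ideal $I$, so $d\alpha_i=0$, and by Theorem~\ref{mgrad} the class $\alpha_i$ lives in $\widetilde H^{0}(K_{\{i,n+i\}})$, the reduced cohomology of a two-point complex. Because every $\alpha_i$ is supported on the original cube facets, all cochains produced below involve only the variables $v_j,u_j$ with $j\in[2n]$; hence the entire computation takes place inside $R(L)$ for the full subcomplex $L:=K_{[2n]}$, which, being flag, is the clique complex of the graph whose edges (read off from $I$) are all bottom--bottom pairs, all top--top pairs, and the bottom--top pairs $\{i,n+c\}$ with $i>c$ or $(i,c)=(1,n)$.

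For the defining system I would take $a_{ij}=v_i\,u_{i+1}\cdots u_j\,u_{n+i}\cdots u_{n+j}$ for $1\le i\le j\le n$, so that $a_{ii}=v_iu_{n+i}$ represents $\alpha_i$. Applying $d$ replaces one exterior generator $u_\ell$ by $v_\ell$ at a time. A \emph{head} replacement $u_\ell\mapsto v_\ell$ with $i<\ell\le j$ yields $v_iv_\ell(\cdots)$; since $i,\ell\le n$ we have $v_iv_\ell\notin I$, and these terms reassemble, up to sign, into $\sum_{k=i}^{j-1}\bar a_{ik}a_{k+1,j}$. A \emph{tail} replacement $u_{n+i+t}\mapsto v_{n+i+t}$ with $0\le t\le j-i$ yields $v_iv_{n+i+t}$; writing $(a,c)=(i,i+t)$ one checks $a\le c\le n$ and $(a,c)\ne(1,n)$ as soon as $j-i\le n-2$, whence $v_iv_{n+i+t}\in I$ and the tail terms vanish. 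Thus $d a_{ij}=\sum_{k=i}^{j-1}\bar a_{ik}a_{k+1,j}$ for all $j-i\le n-2$, which is precisely a defining system; in particular every consecutive product $\langle\alpha_i,\dots,\alpha_j\rangle$ with $(i,j)\ne(1,n)$ contains $0$, so $\langle\alpha_1,\dots,\alpha_n\rangle$ is defined. The sign bookkeeping, with the convention of~\cite{Kr}, is routine and I would relegate it to a remark.

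The top product is then represented by $c=\sum_{k=1}^{n-1}\bar a_{1k}a_{k+1,n}$, which by the head computation equals $\sum_{\ell=2}^{n}\pm\,v_1v_\ell\,u_2\cdots\widehat{u_\ell}\cdots u_n\,u_{n+1}\cdots u_{2n}$. Every summand has multidegree $2[2n]$ and carries $2n-2$ exterior generators, so by Theorem~\ref{mgrad} the class $[c]$ lies in $\widetilde H^{1}(L)$. Introducing the ``missing'' chain $a_{1n}=v_1u_2\cdots u_n\,u_{n+1}\cdots u_{2n}$ and computing $d a_{1n}$, all tail terms vanish except the one with $t=n-1$, because $v_1v_{2n}$ is exactly the excluded pair $(1,n)$ and hence $v_1v_{2n}\notin I$; thus $d a_{1n}=c\pm v_1v_{2n}\,u_2\cdots u_n\,u_{n+1}\cdots u_{2n-1}$, and $[c]$ coincides, up to sign, with the simplicial $1$-cocycle of $L$ dual to the single edge $\{1,2n\}$.

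It remains to prove nontriviality, which is the main obstacle. For $[c]\ne0$ I would exhibit the closed $4$-cycle $z\colon 1\to2\to(n+1)\to2n\to1$, whose four edges all lie in $L$ for $n\ge2$, and note that $z$ runs through the edge $\{1,2n\}$ exactly once, so the pairing gives $\langle[c],[z]\rangle=\pm1$ and $[c]\ne0$. For the indeterminacy I would argue entirely inside the component $\widetilde H^{1}(L)$, where both the product and its indeterminacy live. By the product map~$(*)$ of Theorem~\ref{zkcoh}, a cup product of classes in $\widetilde H^{s_1}$ and $\widetilde H^{s_2}$ of complementary induced subcomplexes lands in degree $s_1+s_2+1$, so hitting $\widetilde H^{1}(L)$ forces $s_1=s_2=0$. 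This excludes every proper sub-Massey-product as a factor, since an $r$-fold product with $r\ge2$ sits in $\widetilde H^{1}$, and it leaves only the two end contributions $\alpha_1\cdot\widetilde H^{0}(L_{[2n]\setminus\{1,n+1\}})$ and $\widetilde H^{0}(L_{[2n]\setminus\{n,2n\}})\cdot\alpha_n$. Finally I would check that both of these induced subcomplexes are connected: each is a bottom-clique and a top-clique joined by an edge ($\{n,n+2\}$ and $\{n-1,n+1\}$ respectively, both present once $n\ge3$), so their reduced $\widetilde H^{0}$ vanish and the indeterminacy in $\widetilde H^{1}(L)$ is zero. Hence $\langle\alpha_1,\dots,\alpha_n\rangle=\{[c]\}$ is a single nonzero class, recovering Baskakov's complex when $n=3$. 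The boundary case $n=2$ is not a true Massey product: there $\langle\alpha_1,\alpha_2\rangle$ is the cup product $[v_1v_2u_3u_4]\in\widetilde H^{1}(L)$ with $L$ an empty square, which is nonzero and carries no indeterminacy.
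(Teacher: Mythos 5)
Your proposal is correct in substance and works inside the same framework as the paper's proof: both compute in the multigraded Koszul algebra of Theorems~\ref{zkcoh} and~\ref{mgrad}, both confine the $n$-product to the multidegree component $\widetilde H^{1}(K_{[2n]})$, and both arrive at the very same representative $v_1v_{2n}u_2\cdots u_{2n-1}$. Where you genuinely differ is in execution, and in two places your version is more explicit than the paper's. First, the paper argues by induction on $n$ that \emph{any} diagonal-by-diagonal extendable defining system has entries of the monomial shape $v_ku_{j_1}\cdots u_{j_{2i-1}}$ up to coboundaries, exhibiting the pattern only in the $n=4$ example; you instead write down one closed-form defining system $a_{ij}=v_iu_{i+1}\cdots u_j\,u_{n+i}\cdots u_{n+j}$ and verify $da_{ij}=\sum_k\bar a_{ik}a_{k+1,j}$ directly against the ideal $I$ -- your head/tail bookkeeping, including the role of the one exceptional edge $\{1,2n\}$ (present since $(k,c)=(1,n)$ is excluded from the generators of $I$), checks out. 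Second, for nontriviality the paper essentially asserts that the class of $v_1v_{2n}u_2\cdots u_{2n-1}$ is nonzero in its multigraded component (noting decomposability only in a remark), whereas you give a verifiable certificate: the class is the simplicial cocycle of $L=K_{[2n]}$ dual to the edge $\{1,2n\}$ and pairs to $\pm1$ with the $1$-cycle $1\to 2\to (n{+}1)\to 2n\to 1$, all four of whose edges do lie in $L$. That is a cleaner and more checkable endgame than the paper's.

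The step you should not wave through is the indeterminacy, and here your argument has a real gap -- the same spot where the paper leans on its terse inductive normal-form claim. You assert (i) that every element of $\langle\alpha_1,\ldots,\alpha_n\rangle$, over \emph{all} defining systems, lies in the single component $\widetilde H^{1}(L)$; but $K_{\mathcal P}$ has $m=\frac{n(n+3)}{2}-1$ vertices, a defining system may involve the truncation vertices $w$ and non-multihomogeneous entries, and projecting entries to their expected multidegrees does not obviously preserve the defining-system equations, since products mix multigraded components. And (ii) your reduction of the indeterminacy to ``only the two end contributions'' $\alpha_1\cdot\widetilde H^{0}$ and $\widetilde H^{0}\cdot\alpha_n$ is modelled on the triple-product formula, which is false for $n$-fold products with $n\ge 4$: varying an interior entry $a_{ij}$ forces adjustments of all larger entries, the variation term $\bar x\,a_{j+1,n}$ is not itself closed, and the correct general statement involves matric Massey products, not just cup products of classes. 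To legitimize your exclusion argument you need either the paper's induction on the shape of arbitrary defining-system entries (itself only sketched there) or the matric-Massey machinery, carried out in the multigrading so that $\widetilde H^{\ge 1}$-factors are ruled out by the degree count $s_1+s_2+1$ from the product map $(*)$. Once that is granted, your connectedness checks for $L_{[2n]\setminus\{1,n+1\}}$ and $L_{[2n]\setminus\{n,2n\}}$ (via the edges $\{n,n+2\}$ and $\{n-1,n+1\}$, valid for $n\ge 3$, with $n=2$ handled separately as a cup product) correctly finish the proof that the product is the single nonzero class.
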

\begin{proof}
Let us prove the theorem by induction on $n$. The base case $n=2$ is trivial: $\alpha_1$ and $\alpha_2$ are the classes of 3-dimensional spheres in $\mathcal Z_{\mathcal P}\cong S^{3}\times S^{3}$ and their cup-product (i.e., Massey 2-product) is the dual to the fundamental class of $\mathcal Z_{\mathcal P}$. 

We first note that all Massey products of orders less than $n$ vanish simultaneously in $H^{*,*}(\mathcal Z_{\mathcal P})\cong H\bigl[\Lambda[u_1,\ldots,u_m]\otimes \ko[\mathcal P],d\bigr]$, i.e., contain coboundaries. Starting with the representing cocycles $v_{i}u_{i+n}$ of $\alpha_i$ it can be seen by induction on the dimension $n$ of $\mathcal P$ that if a defining system $C$ for the $n$-product $\langle\alpha_{1},\ldots,\alpha_{n}\rangle$ can be extended from $i$th diagonal of the matrix $C$ to its $(i+1)$th diagonal for all $2\leq i\leq n$ then $c_{lm}, m-l=i\geq 2$ have either a form $v_{k}u_{j_1}\ldots u_{j_{2i-1}}$ or a form $v_{k}u_{j_1}\ldots u_{j_{2i-1}}+d(u_{k}u_{j_1}\ldots u_{j_{2i-1}})$ (up to the signs). The latter can be checked as the differential in the cohomology algebra preserves multigrading and by using the codimension 2 face cuts from the definition of $\mathcal P$ (see also the Example below).

Then the Massey $n$-product $\langle\alpha_{1},\ldots,\alpha_{n}\rangle$ is defined and any cohomology class belonging to it lies in the multigraded component  $H^{-(2n-2),(2,\ldots,2,0,\ldots,0)}(\mathcal Z_{\mathcal P})$ of the moment-angle manifold $\mathcal Z_{\mathcal P}$ with one of its representatives being the class of the cocycle $v_{1}v_{2n}u_{2}\ldots u_{2n-1}$. Up to sign we have the following equality for any representative $c$ of an element in $\langle\alpha_{1},\ldots,\alpha_{n}\rangle$ for any defining system $C$, see~\cite{Kr}:
$$
c=d(c_{1,n+1})-(-1)^{3}v_{1}u_{n+1}c_{2,n+1}-\overline{c}_{1,3}c_{3,n+1}-\ldots-\overline{c}_{1,n}v_{n}u_{2n},
$$
where $(n+1)\times (n+1)$-matrix $C$ is upper triangular with zeros on the diagonal and $c_{i,i+1}=-v_{i}u_{n+i}$ for $1\leq i\leq n$, such that the following condition holds:
$$
cE_{1,n+1}=d(C)-\overline{C}\cdot{C}
$$ 
and $\overline{c}_{ij}=(-1)^{|c_{ij}|}c_{ij}$ depends on the degree $|c_{ij}|$ of a matrix element $c_{ij}$. 

By definition of higher Massey operations (see~\cite{Kr}), one has: $d(c_{2,n+1})$ is a representative in $\langle\alpha_{2},\ldots,\alpha_{n}\rangle$ and $d(c_{1,n})$ is a representative in $\langle\alpha_{1},\ldots,\alpha_{n-1}\rangle$. To prove that $v_{1}v_{2n}u_{2}\ldots u_{2n-1}$ is the only representing cocycle for the $n$-product we use induction on $n$, the representing monomials for the indeterminacies and the multigrading in $H^{*}(\mathcal Z_{\mathcal P})$, see Theorem~\ref{mgrad}. For instance, the indeterminacy for the first of the $(n-1)$-products above lies in the multigraded component of $v_{2}u_{3}\ldots u_{n}u_{n+2}\ldots u_{2n}$ and the only cocycle there is the coboundary $d(u_{2}\ldots u_{n}u_{n+2}\ldots u_{2n})$. As for the second $(n-1)$-product above, the indeterminacy lies in the multigraded component of $v_{1}u_{2}\ldots u_{n-1}u_{n+1}\ldots u_{2n-1}$ and the only cocycle there is the coboundary $d(u_{1}\ldots u_{n-1}u_{n+1}\ldots u_{2n-1})$.

Thus, the Massey $n$-product $\langle\alpha_{1},\ldots,\alpha_{n}\rangle$ is defined and nontrivial, consisting only of the cohomology class of $v_{1}v_{2n}u_{2}\ldots u_{2n-1}$. 
\end{proof}

\begin{rema}
Note, that the nontrivial $n$-product constructed above is decomposable. 
Namely, one has: $[v_{1}v_{2n}u_{2}\ldots u_{2n-1}]=\pm[v_{1}u_{n+1}\ldots u_{2n-1}]\cdot [v_{2n}u_{2}\ldots u_{n}]$.
\end{rema}

\begin{exam}
Consider the case $n=4$. Then the Stanley--Reisner ideal of $\mathcal P$ is 
$$
I=(v_{1}v_{5},v_{2}v_{6},v_{3}v_{7},v_{4}v_{8},v_{1}v_{6},v_{2}v_{7},v_{3}v_{8},v_{1}v_{7},v_{2}v_{8},\ldots)
$$
and the cohomology classes $\alpha_{i},1\leq i\leq 4$ are represented by the cocycles $a_{i}=v_{i}u_{4+i},1\leq i\leq 4$. One has (up to sign):
$$
a_{1}a_{2}=d(v_{1}u_{2}u_{5}u_{6})=d(c_{1,3}),
$$
$$
a_{2}a_{3}=d(v_{2}u_{3}u_{6}u_{7})=d(c_{2,4}),
$$
$$
a_{3}a_{4}=d(v_{2}u_{4}u_{7}u_{8})=d(c_{3,5}).
$$
Then one has the following cocycle representing a class in $\langle\alpha_{1},\alpha_{2},\alpha_{3}\rangle$ (here the Massey 2-product of $a$ and $b$ is equal to $\overline{a}\cdot{b}$, $\overline{a}=(-1)^{|a|}a$): 
$$
v_{1}u_{5}\cdot (-v_{2}u_{3}u_{6}u_{7})-v_{1}u_{2}u_{5}u_{6}\cdot v_{3}u_{7}=d(v_{1}u_{2}u_{3}u_{5}u_{6}u_{7})=d(c_{1,4})
$$ 
and the following cocycle representing a class in $\langle\alpha_{2},\alpha_{3},\alpha_{4}\rangle$:
$$
v_{2}u_{6}\cdot (-v_{3}u_{4}u_{7}u_{8})-v_{2}u_{3}u_{6}u_{7}\cdot v_{4}u_{8}=
d(v_{2}u_{3}u_{4}u_{6}u_{7}u_{8})=d(c_{2,5}).
$$
Alternatively, one has (up to sign):
$$
a_{1}a_{2}=d(v_{2}u_{1}u_{5}u_{6}-v_{5}u_{2}u_{1}u_{6}+v_{6}u_{2}u_{1}u_{5})=d(c_{1,3}),\\
$$
$$
a_{2}a_{3}=d(v_{3}u_{2}u_{6}u_{7}-v_{6}u_{3}u_{2}u_{7}+v_{7}u_{3}u_{2}u_{6})=d(c_{2,4}),\\
$$
$$
a_{3}a_{4}=d(v_{4}u_{2}u_{7}u_{8}-v_{7}u_{4}u_{2}u_{8}+v_{8}u_{4}u_{2}u_{7})=d(c_{3,5}).
$$
The representing cocycle for $\langle\alpha_{1},\alpha_{2},\alpha_{3}\rangle$ will be $d(v_{3}u_{1}u_{2}u_{5}u_{6}u_{7})=d(c_{1,4})$ and for $\langle\alpha_{2},\alpha_{3},\alpha_{4}\rangle$ one gets: $d(v_{4}u_{2}u_{3}u_{6}u_{7}u_{8})=d(c_{2,5})$. 

Thus, the Massey products $\langle\alpha_{1},\alpha_{2},\alpha_{3}\rangle$ and $\langle\alpha_{2},\alpha_{3},\alpha_{4}\rangle$ vanish simultaneously and the 4-product $\langle\alpha_{1},\alpha_{2},\alpha_{3},\alpha_{4}\rangle$ is defined.
More precisely, the representing cocycle $c$ for $\langle\alpha_{1},\alpha_{2},\alpha_{3},\alpha_{4}\rangle$ is equal to:
$$
d(c_{1,5})-\overline{a}_{1}c_{2,5}-\overline{c}_{1,3}c_{3,5}-\overline{c}_{1,4}a_{4}.
$$ 
Considering the multigrading in $H^{*}(\zp)$ it is easy to see that the latter 4-fold product consists of the only class with a representative (up to sign) $v_{1}v_{8}u_{2}\ldots u_{7}$ in $H^{-6,(2,\ldots,2,0,\ldots,0)}(\zp)\subset H^{-6,16}(\zp)\subset H^{10}(\zp)$, where $\zp$ is a closed smooth 17-dimensional manifold. 

Finally, one has: $[v_{1}v_{8}u_{2}\ldots u_{7}]=-[v_{1}u_{5}u_{6}u_{7}]\cdot [v_{8}u_{2}u_{3}u_{4}]$.
\end{exam}

Using Theorem~\ref{mainMassey} we can construct a smooth closed 2-connected manifold $M$ with a compact torus action, such that there are nontrivial higher Massey products of any prescribed orders $n_{1},\ldots,n_{r}, r\geq 2$ in $H^{*}(M)$. Namely, consider the building sets $B_{i},1\leq i\leq r$ for $\mathcal P^{n_i},1\leq i\leq r$. Let $M=\zp$, where $P=P_{B'}, B'=B(B_{1},\ldots,B_{r})$ (\cite[Construction 1.5.19]{TT}) and $B$ be a connected building set of a $(r-1)$-dimensional cube. Then $P$ is a flag polytope combinatorially equivalent to $I^{r-1}\times\mathcal P^{n_1}\times\ldots\times\mathcal P^{n_r}$ (\cite[Lemma 1.5.20]{TT}) and $H^{*}(\zp)$ contains nontrivial Massey products of orders $n_{i},1\leq i\leq r$ as the functor $\mathcal Z$ preserves products for simple polytopes. Note, that $P=P_{B'}$ is still a flag nestohedron and, therefore, can be realized as a 2-truncated cube.

\end{document}